\newcommand*{\backin}{\rotatebox[origin=c]{-180}{$\in$}}
\newtheorem{theorem}{Theorem}[section]
\newtheorem{lemma}[theorem]{Lemma}
\newtheorem{proposition}[theorem]{Proposition}
\newtheorem{notation}[theorem]{Notation}
\newtheorem{definition}[theorem]{Definition}
\begin{document}

\title[Normal $2$-coverings]{Normal $2$-coverings in affine groups of small dimension}

\author[M. Fusari]{Marco Fusari}
\address{Marco Fusari, Dipartimento di Matematica ``Felice Casorati", University of Pavia, Via Ferrata 5, 27100 Pavia, Italy}
\email{lucamarcofusari@gmail.com}

\author[A.~Previtali]{Andrea Previtali}
\address{Andrea Previtali, Dipartimento di Matematica e Applicazioni, University of Milano-Bicocca, Via Cozzi 55, 20125 Milano, Italy}
\email{andrea.previtali@unimib.it}

\author[P.~Spiga]{Pablo Spiga}
\address{Pablo Spiga, Dipartimento di Matematica e Applicazioni, University of Milano-Bicocca, Via Cozzi 55, 20125 Milano, Italy}
\email{pablo.spiga@unimib.it}

\begin{abstract}
A finite group $G$ admits a normal $2$-covering if there exist two proper subgroups $H$ and $K$ with $G=\bigcup_{g\in G}H^g\cup\bigcup_{g\in G}K^g$. For determining inductively the finite groups admitting a normal $2$-covering, it is important to determine all finite groups $G$ possessing a normal $2$-covering, where no proper quotient of $G$ admits such a covering. 

Using terminology arising from the O'Nan-Scott theorem, Garonzi and Lucchini have shown that these groups fall into four natural classes: product action, almost simple, affine and diagonal.

In this paper, we start a preliminary investigation of the affine case.  

\begin{center}
\textit{With deep appreciation to Martino Garonzi and Andrea Lucchini, for keeping us entertained.}
\end{center}
\keywords{normal covering, affine group, supersolvable, derangement}
\end{abstract}

\subjclass[2010]{Primary 05C35; Secondary 05C69, 20B05}

\maketitle
\section{Introduction}\label{intro}
In finite group theory, Jordan's theorem is a very beautiful and useful result: a finite group cannot be covered by the conjugates of a proper subgroup. That is, if $G$ is a group, $H$ is a subgroup of $G$, and $G = \bigcup_{g \in G} H^g$, then  $G = H$. This result is frequently applied in the context of permutation groups, where it can be rephrased as saying that every transitive group of degree greater than $1$ admits a derangement (that is, a permutation with no fixed points).

With this in mind, it is natural to look into the groups $G$ that admit two proper subgroups whose conjugates together cover the whole of $G$. More generally, a \textit{\textbf{normal $k$-covering}} of $G$ is a collection of $k$ proper subgroups $H_1, \ldots, H_k$ of $G$ such that every element of $G$ is conjugate to an element of some $H_i$. That is,
$$G = \bigcup_{i=1}^k \bigcup_{g \in G} H_i^g.$$
The smallest such $k \in \mathbb{N}$ for which $G$ admits a normal $k$-covering is called the \textit{\textbf{normal covering number}} of $G$.

It is not particularly meaningful to ask for a complete classification of the finite groups $G$ with $\gamma(G) = 2$. In fact, if $G$ is any group with a normal subgroup $N$ such that $\gamma(G/N) = 2$, then $\gamma(G) = 2$ as well. This follows because the pullback, under the natural projection, of  two proper subgroups of $G/N$ witnessing a normal $2$-covering yields a normal $2$-covering for $G$. From this perspective, if we want to truly understand which finite groups admit a normal $2$-covering, it makes more sense to focus on the case where:
\begin{itemize}
\item $\gamma(G) = 2$, and
\item $\gamma(G/N) \ne 2$, for every non-identity normal subgroup $N$ of $G$.
\end{itemize}
In this paper, a group with this property is called \textit{\textbf{basic}}. Thus, basic
groups are the building objects having normal covering number 2, because any other such group is a group extension of one of these building blocks.

The perspective outlined above is precisely the one taken by Garonzi and Lucchini in~\cite{GL}. To provide some context for our work, we briefly summarize the key elements of their contribution. Suppose $G$ is a basic group, and let $H$ and $K$ be proper subgroups of $G$ that realize one of these normal $2$-coverings. By suitably replacing $H$ and $K$ if needed, we may assume that both are maximal subgroups of $G$.

A fundamental result in~\cite{GL} is that such a group $G$ must have a unique minimal normal subgroup, denoted by $N$. Furthermore, they prove that either $G$ is almost simple, or—after possibly swapping $H$ and $K$—we have $N \le H$. The case where $G$ is almost simple and neither $H$ nor $K$ contains the socle of $G$ has been fully analyzed in the monograph~\cite{libro}. Without delving into technical details, the basic groups in this class are found to be highly constrained: apart from a few small examples arising as ``low-level noise'', the remaining groups belong to natural families whose coverings admit geometric or combinatorial interpretations.
 Thus, from this point on, we restrict our attention to the scenario where $N \le H$.

Given that $\gamma(G/N) > 2$, it follows that $N \nleq K$, so $K$ must be a maximal subgroup of $G$ with trivial core. This means the natural action of $G$ on the set of right cosets of $K$ yields a faithful and primitive permutation representation of $G$, where $K$ acts as a point stabilizer. In this context, Garonzi and Lucchini employ the language of the O'Nan–Scott classification of primitive groups, showing that $G$ falls into one of the following types: almost simple, affine, product action, or diagonal. 

The classification of basic diagonal groups was completed in~\cite{FPS}, where it is shown (again) that such groups are extremely constrained in structure and can be described in detail.

The classification of basic almost simple and product action groups remains open, and its resolution is closely connected to the theory of Shintani descent. Some progress has been reported by Scott Harper~\cite{scott}, building on techniques developed in his monograph~\cite{scott1}.

In this paper, we initiate a study of the class of \emph{basic affine groups}. For completeness, we restate the formal definition below.
\begin{definition}\label{der}{\rm
Let $G$ be a finite group. We say that $G$ is a \textit{\textbf{basic group of affine type}} if the following conditions hold:
\begin{itemize}
    \item $G$ has a unique minimal normal subgroup $V$;
    \item $V$ is abelian;
    \item $\gamma(G/V) \ne 2$;
    \item $\gamma(G) = 2$.
\end{itemize}
Since our focus in this paper is on this class of groups, we adopt the following terminology: a finite group $G$ is said to be \textbf{\textit{non-basic (of affine type)}} if it satisfies the first three conditions above, but $
\gamma(G) \ne 2.$}
\end{definition}

Before presenting our main results, we offer some preliminary remarks and informal observations. Computational data, obtained using the software \textsc{Magma}~\cite{magma}, suggests that the vast majority of primitive subgroups of the affine semilinear group $\mathrm{A}\Gamma\mathrm{L}_d(q)$ appear to be basic for small values of $d$. This observation is based solely on computational evidence and stands in sharp contrast to the classifications presented in~\cite{libro,FPS}, where the basic groups (of almost simple and diagonal type) do show a very constrained structure. As a result, our theorems are formulated to identify and describe the non-basic affine groups.

Our first main theorem is an auxiliary result and provides a complete classification of supersolvable groups with normal covering number~$2$.
\begin{theorem}\label{thrm:1}
Let $G$ be a supersolvable group. Then $\gamma(G)\ne 2$ if and only if $G$ is nilpotent.
\end{theorem}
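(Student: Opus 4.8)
\textit{Proof strategy.} The equivalence splits into two implications, of very different difficulty.

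For the easy implication, suppose $G$ is nilpotent and, towards a contradiction, that $H$ and $K$ are proper subgroups with $G=\bigcup_{g\in G}H^g\cup\bigcup_{g\in G}K^g$. Each of $H,K$ lies in a maximal subgroup, and in a nilpotent group every maximal subgroup is normal; hence $\bigcup_g H^g$ and $\bigcup_g K^g$ are contained in proper normal subgroups $M$ and $L$, forcing $G=M\cup L$. Since a group is never the union of two proper subgroups, this is absurd, so $\gamma(G)\neq 2$.

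For the converse I would prove the contrapositive: a supersolvable group $G$ that is not nilpotent has $\gamma(G)=2$. Argue by induction on $|G|$. If $G$ has a non-trivial normal subgroup $N$ with $G/N$ not nilpotent, then $G/N$ is supersolvable, not nilpotent and of smaller order, so $\gamma(G/N)=2$ by induction; pulling a witnessing pair of subgroups back along $G\to G/N$ (the pull-back principle recalled in the Introduction) gives $\gamma(G)=2$. Hence we may assume that \emph{every proper quotient of $G$ is nilpotent}, and in this case the plan is to show that $G\cong C_r\rtimes C_n$ is a Frobenius group, with $r$ prime, $n>1$ and $n\mid r-1$. The chain of reductions I have in mind is: (i) as $G$ is not nilpotent but all proper quotients are, $G$ has a unique minimal normal subgroup $V$, namely the nilpotent residual, and $V\cong C_r$ by supersolvability; (ii) $V$ is non-central (else $G$ would be a central extension of the nilpotent group $G/V$), so $C:=C_G(V)$ is a proper normal subgroup, and it is nilpotent, being a central extension of the nilpotent group $C/V\leq G/V$; (iii) the Hall $r'$-subgroup of $C$ is characteristic in $C$, hence normal in $G$ of order prime to $r$, hence trivial, so $C$ is an $r$-group; since a normal subgroup of order $r$ is central in any Sylow $r$-subgroup $P$, one gets $P\leq C_G(V)=C\leq P$, so $C=P$ is a normal Sylow $r$-subgroup and $G/P\hookrightarrow\operatorname{Aut}(C_r)$ is cyclic of order $n>1$ dividing $r-1$; (iv) by Schur--Zassenhaus $G=P\rtimes S$ with $S\cong C_n$ acting coprimely on $P$ and faithfully on $V$; (v) $G/V\cong(P/V)\rtimes S$ is nilpotent with $S$ a normal Hall $r'$-subgroup, which forces $[P,S]\leq V$, while $[V,S]=V$ (as $S$ acts nontrivially on the one-dimensional module $V$), so $[P,S]=V$, and coprime action yields $P=[P,S]\,C_P(S)=V\,C_P(S)$ with $V\cap C_P(S)=C_V(S)=1$; as $V$ is central in $P$ this is an internal direct product $P=V\times C_P(S)$, whence $G=(V\rtimes S)\times C_P(S)$; (vi) finally $Z(G)\leq C_G(V)=P$ is a normal $r$-subgroup, so if non-trivial it contains $V$, contradicting the non-trivial action of $S$ on $V$; hence $Z(G)=1$, while $Z(G)=Z(V\rtimes S)\times Z(C_P(S))=Z(C_P(S))$ since the Frobenius group $V\rtimes S$ has trivial centre, so the $r$-group $C_P(S)$ has trivial centre and is therefore trivial. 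Thus $P=V$ and $G=V\rtimes S\cong C_r\rtimes C_n$ is Frobenius with kernel $V$.

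For such a $G$ the covering is transparent: take $H:=V$, which is normal, so $\bigcup_g H^g=V$; and $K:=S$, whose $r$ conjugates are self-normalizing, pairwise meet only in the identity, and exhaust $G\setminus V$, since $G$ is Frobenius with complement $S$. Then $\bigcup_g H^g\cup\bigcup_g K^g=G$, and as $\gamma(G)\geq 2$ by Jordan's theorem, we get $\gamma(G)=2$, completing the induction. The only genuinely delicate part of the argument is the structural reduction in the previous paragraph — the string of commutator and coprime-action identities that collapses $G$ onto a one-dimensional affine group; the opening reductions via quotients and the final covering count are routine.
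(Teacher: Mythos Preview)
Your proof is correct, but the route you take for the hard implication is considerably longer than the paper's. Both arguments end at the same place --- a Frobenius group $C_r\rtimes C_n$ --- but the reductions differ. You induct, reduce to the case where every proper quotient is nilpotent, and then run a six-step structural analysis (nilpotent residual, centralizer of $V$, Hall subgroups, Schur--Zassenhaus, the coprime-action identity $P=[P,S]\,C_P(S)$, and a centre computation) to force $P=V$. The paper instead exploits directly the characterisation ``nilpotent $\Leftrightarrow$ every maximal subgroup is normal'': it picks a non-normal maximal $M$, passes to $\bar G=G/\mathrm{core}_G(M)$, takes a minimal normal $N$ of $\bar G$ (automatically cyclic of prime order by supersolvability), and observes in two lines that $\mathbf{C}_{\bar G}(N)\cap\bar M$ is normal in $\bar G$ and contained in the core-free $\bar M$, hence trivial, so $\mathbf{C}_{\bar G}(N)=N$ by Dedekind and $\bar G$ is Frobenius. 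No induction, no Schur--Zassenhaus, no coprime action.

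What your approach buys is a fully explicit description of the minimal counterexample (it really is $C_r\rtimes C_n$, not merely Frobenius), and the machinery you invoke is standard. What the paper's approach buys is brevity: the non-normal maximal subgroup hands you the Frobenius complement for free, so the delicate commutator calculations in your step~(v) and the centre argument in step~(vi) become unnecessary.
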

In our view, this result is of independent interest and also plays a key role in our analysis of basic affine groups of small dimension. 

Throughout the remainder of the paper, let $p$ be a prime number, and let $f$ and $d$ be positive integers. Set $q = p^f$. We denote by $\mathrm{A}\Gamma\mathrm{L}_d(q)$ the affine semilinear group, and let $V$ denote its socle. Thus
\[
\mathrm{A}\Gamma\mathrm{L}_d(q) = V \rtimes \Gamma\mathrm{L}_d(q),
\]
where $\Gamma\mathrm{L}_d(q) = \mathrm{GL}_d(q) \rtimes \mathrm{Aut}(\mathbb{F}_q)$, and $\mathbb{F}_q$ denotes the finite field of order~$q$.

\begin{theorem}\label{thrm:2}
 Let $G=V\rtimes H$ be a primitive subgroup of $\mathrm{A}\Gamma\mathrm{L}_1(q)$, where $H\le\Gamma\mathrm{L}_1(q)$ is the stabilizer of the zero vector. Then $G$ is non-basic if and only if 
\begin{itemize}
\item $p\equiv 3\pmod 4$,
\item $f=2$,
\item $H$ is a dihedral $2$-group and, using Notation~$\ref{not}$, $H$ is $\mathrm{GL}_1(q)$-conjugate to  $\langle\sigma,\lambda^{(p-1)e}\rangle$ for some divisor $e$ of $p+1$.
\end{itemize}
\end{theorem}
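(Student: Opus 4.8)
The plan is to combine Theorem~\ref{thrm:1} with a direct analysis of derangements in the affine action. Write $q=p^f$ and identify $V$ with the additive group of $\mathbb{F}_q$; then $G=V\rtimes H$ is primitive exactly when $H$ acts irreducibly on $V$ over $\mathbb{F}_p$, in which case $V$ is the unique minimal normal subgroup of $G$, it is abelian, and $G/V\cong H$. Since $H$ lies in the metacyclic group $\Gamma\mathrm{L}_1(q)=\GL_1(q)\rtimes\Aut(\mathbb{F}_q)$ it is supersolvable, so Theorem~\ref{thrm:1} gives $\gamma(G/V)\ne 2$ if and only if $H$ is nilpotent. Hence, in the language of Definition~\ref{der}, $G$ is non-basic precisely when $H$ is nilpotent \emph{and} $\gamma(G)\ne 2$, so the task reduces to deciding, among the nilpotent irreducible subgroups $H\le\Gamma\mathrm{L}_1(q)$, for which the affine group $V\rtimes H$ satisfies $\gamma(V\rtimes H)\ne 2$. (The degenerate case $H=1$, forcing $q=p$ and $G\cong\mathbb{Z}_p$, admits no normal covering and is excluded.)

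Next I would isolate a ``generic'' family that is always basic. Write the action of $h\in H$ on $\mathbb{F}_q$ as $x\mapsto c(h)\,x^{p^{j(h)}}$ with $c(h)\in\mathbb{F}_q^{*}$, put $N=N_{\mathbb{F}_q/\mathbb{F}_p}$, and set $\nu(h)=N(c(h))=c(h)^{(q-1)/(p-1)}\in\mathbb{F}_p^{*}$. Using that $N$ is Frobenius-invariant one checks that $\nu\colon H\to\mathbb{F}_p^{*}$ is a homomorphism. An element $(v,h)\in G$ fixes a point of $V$ exactly when $v$ lies in the image of the $\mathbb{F}_p$-linear map $x\mapsto x-c(h)\,x^{p^{j(h)}}$; and if this map has a nonzero kernel, say $x=c(h)x^{p^{j(h)}}$ with $x\ne 0$, then $c(h)=x^{1-p^{j(h)}}$, whence $\nu(h)=N(x)^{1-p^{j(h)}}=1$. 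Thus $\nu(h)\ne 1$ forces $(v,h)$ to have a fixed point for every $v$. Consequently, if $\nu$ is nontrivial, then the point stabiliser $H$ (whose conjugates are exactly the elements of $G$ with a fixed point) together with the proper normal subgroup $V\rtimes\ker\nu$ form a normal $2$-covering, so $\gamma(G)=2$ and $G$ is basic. Therefore for non-basic $G$ we must have $\nu\equiv 1$; in particular $C_0:=H\cap\GL_1(q)$ lies in $\ker N$, the group of norm-one scalars.

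The third and \emph{hardest} step is to force a nilpotent irreducible $H$ with $\nu\equiv 1$ into the dihedral family of the statement. Let $t\mid f$ be the order of the cyclic group $H/C_0\le\Aut(\mathbb{F}_q)$. If $t=1$ then $H=C_0\le\GL_1(q)$ acts fixed-point-freely, $G$ is a Frobenius group, and $\gamma(G)=2$; so $t\ge 2$. Now I would run a case analysis on $t$ and $f$, using: (i) $\nu\equiv 1$, which forces all scalar parts of elements of $H$ to have norm $1$; (ii) nilpotency, which forces, for each prime $\ell\nmid t$, the Sylow $\ell$-subgroup of $H$ into $C_0\cap\mathbb{F}_{p^{f/t}}^{*}$ and, since a generalized dihedral group $C_0\rtimes\langle s\rangle$ with $s$ inverting $C_0$ is nilpotent iff $|C_0|$ is a power of $2$, severely restricts the semilinear part; and (iii) irreducibility, which (given (i)--(ii)) is seen to fail — by exhibiting an invariant eigenspace or subfield, according to whether the relevant roots of unity lie in $\mathbb{F}_p$ — in every configuration with $t\ge 3$ or with $f>t$. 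I expect this bookkeeping to collapse everything to $f=t=2$ and $p\equiv 3\pmod 4$, with $C_0$ cyclic of $2$-power order $(p+1)/e$ (for some divisor $e$ of $p+1$; and $(p+1)/e\ge 4$ by irreducibility) inside the cyclic group of order $p+1$ in $\GL_1(q)$, and the semilinear generator an involution inverting $C_0$. A final conjugation by a scalar (replacing the norm-one element $c(s)$ by $1$) identifies $H$ with $\langle\sigma,\lambda^{(p-1)e}\rangle\cong D_{2(p+1)/e}$, yielding the three conditions in the statement.

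Finally I would prove the converse by a short argument with maximal subgroups. Let $G=V\rtimes H$ with $H\cong D_{2d}$, $d=(p+1)/e$ a power of $2$ at least $4$, over $\mathbb{F}_{p^2}$ with $p\equiv 3\pmod 4$; as $H$ is a $2$-group it is nilpotent, so $\gamma(G/V)\ne 2$, and it remains to show $\gamma(G)\ne 2$. Since $\gcd(|H|,p)=1$, Schur--Zassenhaus makes all complements to $V$ conjugate, so the maximal subgroups of $G$ are the conjugates of $H$ (index $p^2$, with union of conjugates equal to the set of non-derangements) and the subgroups $V\rtimes M$ with $M$ maximal — hence normal of index $2$ — in $H$ (index $2$, with $(V\rtimes M)^G=\{(v,h):h\in M\}$). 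Any normal $2$-covering refines to one by two maximal subgroups; but two of index $2$ would cover the $2$-group $H=G/V$ by two proper subgroups, two conjugates of $H$ cover only the non-derangements (and $G$ has derangements, e.g.\ the nonzero translations), and in the mixed case $\{H,\,V\rtimes M\}$ one notes that for every reflection $\mu^i\sigma$ of $H\cong D_{2d}$ the scalar $\mu^i$ has norm $1$, so some $(v,\mu^i\sigma)$ is a derangement, which would force every one of the $d\ge 4$ reflections of $H$ into $M$ — impossible, since they are spread across two of the three maximal subgroups of $D_{2d}$. Hence $\gamma(G)\ne 2$, so $G$ is non-basic, completing the proof. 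The main obstacle is the case analysis of the third step; the remaining steps are essentially the derangement calculation together with Theorem~\ref{thrm:1}.
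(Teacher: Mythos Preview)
Your first, second and fourth steps are fine (and the direct maximal-subgroup argument in step 4 is a perfectly valid, if longer, alternative to the paper's one-line computation that $\sigma,\sigma\lambda^{(p-1)e}\in H^*$ generate $H$). The problem is step 3, and it is a genuine gap rather than missing bookkeeping.

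Your only tool for producing a normal $2$-covering is the norm homomorphism $\nu:H\to\mathbb F_p^{*}$. But $\nu\equiv 1$ is strictly weaker than the condition the paper extracts from Proposition~\ref{prop:gl}, namely $H=\langle H^*\rangle$ (Lemma~\ref{lemma:mercoledi0}). Concretely, take $p=2$, $f=6$, and $H=\langle\sigma^2,\lambda^7\rangle\le\Gamma\mathrm L_1(64)$. Here $\lambda^7$ has order $9$ and ${\bf o}_9(2)=6$, so $\langle\lambda^7\rangle$ already acts irreducibly on $\mathbb F_{64}$; thus $H$ is irreducible, a nilpotent $3$-group, and $\nu\equiv 1$ trivially since $\mathbb F_2^{*}=1$. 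This has $t=3$ and $f=6$, contradicting your claim that irreducibility must fail whenever $t\ge 3$ or $f>t$. Nevertheless $G=V\rtimes H$ \emph{is} basic: using Lemma~\ref{lemma:mercoledi1} one checks $\langle H^*\rangle=\langle\sigma^2,\lambda^{21}\rangle$ has index $3$ in $H$. You have no mechanism to detect this, because your covering construction in step 2 needs $\nu$ nontrivial, which is impossible for $p=2$. (Similar obstructions arise for odd $p$ with $j>1$: your condition gives only $(p-1)\mid d$, whereas the paper's $H=\langle H^*\rangle$ forces $(p^j-1)\mid d$ via Lemma~\ref{lemma:mercoledi2}.)

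The paper closes this gap by invoking the Garonzi--Lucchini criterion (Proposition~\ref{prop:gl}): for nilpotent $H$ one has $\gamma(G)\ne 2$ \emph{if and only if} $H=\langle H^*\rangle$. This sharp condition is then exploited through a chain of lemmas (notably Lemma~\ref{lemma:mercoledi2}, the valuation estimate Lemma~\ref{lemma:mercoledi4}, and the Clifford-theoretic Theorem~\ref{thrm:mainmain}) to force $H$ to be a $2$-group and then $f=2$. Your norm map recovers only one consequence of $H=\langle H^*\rangle$ (namely $H^*\subseteq\ker\nu$), and that consequence is too coarse to finish the classification.
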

To deal with $2$-dimensional affine groups we actually need a slightly stronger (but more technical) statement, see Theorem~\ref{thrm:mainmain}.

\begin{theorem}\label{thrm:3}
Let $G=V\rtimes H$ be a primitive subgroup of $\mathrm{A}\Gamma\mathrm{L}_2(q)$, where  $H\le\Gamma\mathrm{L}_2(q)$ is the stabilizer of the zero vector. If $G$ is non-basic, then $f$ and $|H|$ are powers of $2$.
\end{theorem}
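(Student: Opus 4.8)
The plan is to prove the equivalent statement that a non-basic $G$ forces $H$ to be a $2$-group, and then to deduce ``$f$ is a power of $2$'' from this by a representation-theoretic argument at the end. So assume throughout that $\gamma(H)\neq 2$ and $\gamma(G)\neq 2$ (this is what ``$G$ non-basic'' means, since $G/V\cong H$ and $V$ is the unique minimal normal subgroup), and aim for a contradiction whenever $H$ has an element of odd prime order. The basic tool will be a covering criterion: writing $G_0=\{0\}\rtimes H$ for the stabilizer of $0\in V$, the $G$-conjugates of $G_0$ cover every element of $G$ except the ``derangements'' $(v,h)$ with $v\notin(1-h)V$, and such an element can only occur when $\Fix_V(h)\neq 0$; hence if there is a \emph{proper} subgroup $M\leq H$ into which every $h\in H$ with $\Fix_V(h)\neq 0$ is $H$-conjugate, then $\{V\rtimes M,\,G_0\}$ is a normal $2$-covering and $\gamma(G)=2$. (With $M=1$ this is the familiar fact that a Frobenius group has $\gamma=2$.) So the whole game becomes: either find such an $M$, or show $\gamma(H)=2$.

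First I would reduce to $H$ nilpotent. Using Dickson's description of the subgroups of $\PGL_2(q)$ together with the Aschbacher-type classification of irreducible subgroups of $\Gamma\mathrm{L}_2(q)$, if $H$ is not supersolvable then, modulo its subgroup of scalars, $H$ is almost simple with socle $\PSL_2(q_0)$ for some subfield, or it involves one of $\Alt(4),\Sym(4),\Alt(5)$; in each case $H$ has a normal subgroup $N$ with $\gamma(H/N)=2$ --- via the analysis of almost simple groups in~\cite{libro} and a direct check of the finitely many small groups ($\Alt(4),\Sym(4),\Alt(5),\mathrm{SL}_2(3),\dots$) --- and pulling the covering back gives $\gamma(H)=2$, a contradiction. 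Hence $H$ is supersolvable, and then Theorem~\ref{thrm:1} forces $H$ to be nilpotent.

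Now suppose $H$ is nilpotent but not a $2$-group, and write $H=H_2\times H_{2'}$ with $1\neq H_{2'}\trianglelefteq H$ its Hall $2'$-subgroup. I would apply Clifford's theorem to $V|_{H_{2'}}$. If $H_{2'}$ acts by scalars it is central and fixed-point-free, and one checks that every $h\in H$ with a nonzero fixed vector already lies in $H_2$; the criterion with $M=H_2$ gives $\gamma(G)=2$. Otherwise $H_{2'}$ --- hence, by normality, $H$ --- preserves either a Singer torus of $\GL_2(q)$ or a decomposition $V=V_1\oplus V_2$ into two $\mathbb{F}_q$-lines, so $H$ is conjugate into $\Gamma\mathrm{L}_1(q^2)\le\Gamma\mathrm{L}_2(q)$ or into the monomial subgroup $(\mathbb{F}_q^*\times\mathbb{F}_q^*)\rtimes(\Sym(2)\times\Aut(\mathbb{F}_q))$. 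In the first case $G$ is a primitive non-basic subgroup of $\mathrm{A}\Gamma\mathrm{L}_1(q^2)$, so the refined one-dimensional statement Theorem~\ref{thrm:mainmain} applies and forces $q^2=p^2$ with $H$ a dihedral $2$-group --- contradicting $H_{2'}\neq 1$. In the monomial case, let $H^+\leq H$ be the index-$2$ subgroup fixing each $V_i$ (so $H_{2'}\le H^+$); one computes the $\mathbb{F}_p$-linear fixed spaces of the diagonal elements of $H^+$ and of the swap elements of $H\setminus H^+$ using Hilbert's Theorem~90 (a semilinear map $x\mapsto ax^{\sigma}$ of an $\mathbb{F}_q$-line has nonzero fixed space exactly when the appropriate relative norm of $a$ is $1$, and then the fixed space is a line over the fixed field of $\sigma$), and by tracking these norm conditions one shows that the elements of $H$ with a nonzero fixed vector all fall into a single proper subgroup $M$ of $H$; the criterion again yields $\gamma(G)=2$. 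These contradictions show that $H$ is a $2$-group, so $|H|$ is a power of $2$.

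Finally, ``$f$ is a power of $2$'' will follow from modular representation theory. Since a $2$-group cannot act faithfully and nontrivially in characteristic $2$, $p$ is odd. Let $\mathbb{F}_{p^e}$ be the endomorphism field of the irreducible $\mathbb{F}_p[H]$-module $V$; then $V$ is absolutely irreducible over $\mathbb{F}_{p^e}$ of some dimension $m$ (Schur indices over finite fields are trivial). Here $m$ is a power of $2$, being the dimension of an absolutely irreducible representation of a $2$-group, and $e$ is a power of $2$, because the (Brauer) character values of $V$ lie in a $2$-power cyclotomic field $\mathbb{Q}(\zeta_{2^k})$ and $p$ has $2$-power multiplicative order modulo $2^k$ (as $(\mathbb{Z}/2^k)^{\times}$ is a $2$-group). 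Hence $2f=em$ is a power of $2$, and so is $f$. The step I expect to be the real obstacle is the nilpotent monomial case when $H_{2'}$ meets the field-automorphism part of $\Gamma\mathrm{L}_2(q)$: there $M=H_2$ no longer works, and producing the correct proper subgroup $M$ (equivalently, the explicit normal $2$-covering of $G$) requires a delicate book-keeping of the norm conditions above, quite possibly by an induction that descends to a proper subfield of $\mathbb{F}_q$.
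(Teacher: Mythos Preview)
Your overall architecture matches the paper's: reduce to $H$ nilpotent by exhibiting $\gamma(H)=2$ in the non-supersolvable Aschbacher classes ($\mathcal{C}_5,\mathcal{C}_6,\mathcal{S}$), dispatch the extension-field ($\mathcal{C}_3$) case through the one-dimensional classification, and read off that $f$ is a $2$-power at the end (your Brauer-character argument is a clean variant of the paper's one-line subfield remark). Your scalar sub-case is also fine and is absorbed in the paper's organisation.

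The gap you yourself flag is genuine, and it is precisely where the paper invests its effort. In the monomial ($\mathcal{C}_2$) case you assert that $H^\ast$ lands in a single proper subgroup, but the Hilbert~90 bookkeeping is left open, and there is no evident subfield induction that closes it when $H_{2'}$ meets the field-automorphism part. The paper does not attempt this direct route. Instead it invokes Theorem~\ref{thrm:mainmain}: for a nilpotent irreducible $K\le\Gamma\mathrm{L}_1(q)$ whose order has an odd prime divisor, the largest such prime $r$ already divides $|K:\langle K^\ast\rangle|$. This is applied not to $H$ but to the two projections $K_1,K_2$ of the index-$2$ diagonal subgroup $M\le H$ onto $\Gamma\mathrm{L}_1(q)$. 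Setting $M_i=\pi_i^{-1}(\langle K_i^\ast\rangle)$, one gets $r\mid|M:M_i|$. The key observation you are missing is that the swap element $h\in H\setminus M$ may be taken of $2$-power order, and since $H=R\times Q$ is nilpotent this $h$ lies in $Q$ and therefore \emph{centralises} the Sylow $r$-subgroup $R\le M$. As $h$ also interchanges $M_1$ and $M_2$, this forces $R\cap M_1=(R\cap M_1)^h=R\cap M_2$. Now any $m\in M$ with $\Fix_V(m)\neq0$ already fixes a nonzero vector in one of the two summands, hence lies in $M_1\cup M_2$; thus $R^\ast\subseteq R\cap(M_1\cup M_2)=R\cap M_1<R$, contradicting $R=\langle R^\ast\rangle$ (which follows from $H=\langle H^\ast\rangle$ via the splitting $H^\ast\subseteq R^\ast\times Q^\ast$ as in Lemma~\ref{lemma:mercoledi33}). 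So the monomial case is closed by the refined one-dimensional statement together with this parity trick, not by norm computations; your proposed Hilbert~90 route, as it stands, does not reach the conclusion.
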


In our view, the pattern exhibited by $\mathrm{A}\Gamma\mathrm{L}_d(q)$ for $d \in \{1,2\}$ is quite striking: non-basic  groups appear to arise from irreducible representations of certain $2$-groups. However, this pattern already breaks down when $d = 3$. In Section~\ref{sec:example}, we show that if $f$ is even, $p \ne 3$, and $q \notin \{4,25\}$, then the group $G = V \rtimes \mathrm{SU}_3(q^{1/2})$ is non-basic. Motivated by this, we propose the following conjecture.

\begin{theorem}\label{conj}
Let $G = V \rtimes H$ be a primitive subgroup of $\mathrm{A}\Gamma\mathrm{L}_3(q)$, where $H \le \Gamma\mathrm{L}_3(q)$ is the stabilizer of the zero vector. If $G$ is non-basic, then either $H$ is a $\{2,3\}$-group, or $f$ is even and $\mathrm{SU}_3(q^{1/2}) \unlhd H$.
\end{theorem}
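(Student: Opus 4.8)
The plan is to push through the Aschbacher classification of the irreducible subgroups of $\Gamma\mathrm{L}_3(q)$ the two reductions that already drive Theorems~\ref{thrm:2} and~\ref{thrm:3}. Write $G=V\rtimes H$ with $V=\mathbb{F}_q^3$. A computation of $G$-conjugacy inside $V\rtimes H$ shows that, for $(v,h)\in G$, the $H$-component of a $G$-conjugate of $(v,h)$ runs over the $H$-class of $h$ while its $V$-component runs over a coset of $\mathrm{Im}(1-h)$. Two criteria follow. \emph{(A)} If $\gamma(H)=2$, witnessed by proper subgroups $H_1,H_2\le H$, then $V\rtimes H_1$ and $V\rtimes H_2$ witness $\gamma(G)=2$; since $\gamma(H/M)=2$ would lift to $\gamma(H)=2$, a non-basic $G$ forces $\gamma(H/M)\ne2$ for every $M\unlhd H$, whence, by Theorem~\ref{thrm:1}, every supersolvable quotient of $H$ is nilpotent. \emph{(B)} If $H_0\lneq H$ is such that every $h\in H$ with $\mathrm{Fix}_V(h)\ne0$ is $H$-conjugate into $H_0$, then a point stabiliser of $G$ on $V$ together with $V\rtimes H_0$ witness $\gamma(G)=2$; the case $H_0=1$ says a fixed-point-free $H$ on $V\setminus\{0\}$ gives $\gamma(G)=2$, and $H_0=H_v$ says a transitive $H$ on $V\setminus\{0\}$ gives $\gamma(G)=2$. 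Thus, if $G$ is non-basic, $H$ is intransitive on $V\setminus\{0\}$ (so $G$ is primitive but not $2$-transitive), every supersolvable quotient of $H$ is nilpotent, and the point stabilisers $H_{v_1},\dots,H_{v_t}$ (one per $H$-orbit on $V\setminus\{0\}$) are not all simultaneously conjugate into a single proper subgroup of $H$.

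I would then go through the Aschbacher classes of $H\le\Gamma\mathrm{L}_3(q)$ acting irreducibly; for $d=3$ the classes $\mathcal{C}_4$, $\mathcal{C}_7$ are empty and $\mathcal{C}_1$ is excluded by primitivity. In class $\mathcal{C}_3$, $H\le\Gamma\mathrm{L}_1(q^3)$, the action on $V\setminus\{0\}$ is transitive, so this does not occur — alternatively it reduces to the one-dimensional analysis of Theorem~\ref{thrm:mainmain} with $q$ replaced by $q^3$, whose non-basic conclusion would need the analogue of ``$f=2$'', i.e.\ $3f=2$. In the imprimitive class $\mathcal{C}_2$, $V=V_1\oplus V_2\oplus V_3$, the image of $H$ in $S_3$ is a transitive supersolvable quotient of $H$ and hence, by Theorem~\ref{thrm:1}, nilpotent, so it equals $C_3$; one then analyses the non-derangements of $H$ (the diagonal transformations with some fixed vector, and the order-$3$ elements of $3$-cycle type), and, via a case split on the primes dividing $q-1$, shows that they are either confined up to conjugacy to a proper subgroup (so (B) applies) or that $H$ has a supersolvable non-nilpotent quotient (so (A) applies). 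In class $\mathcal{C}_6$, $H\le3^{1+2}\cdot\mathrm{Sp}_2(3)\cdot(\text{scalars})\cdot\mathrm{Aut}(\mathbb{F}_q)$ is a $\{2,3\}$-group apart from its scalar and field-automorphism part, handled by the same bookkeeping; and the remaining subfield/classical combinations of $\mathcal{C}_5$ other than the unitary subgroups reduce, by induction on $f$ together with the orbit analysis, to lower-dimensional or already-treated situations.

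This leaves the decisive cases: the unitary and orthogonal isometry groups, and the almost simple groups of class $\mathcal{S}$ (socle among $\mathrm{PSL}_2(7)$, $A_6$, $3\!\cdot\! A_6$, $3\!\cdot\! A_7$, $\mathrm{PSU}_3(q_0)$ and their near-relatives). For each relevant quasi-simple group in its $3$-dimensional representation I would read off, from the known character and class data, which elements $h$ satisfy $\mathrm{Fix}_V(h)\ne0$, find the minimal subgroups collecting those classes, and decide whether they sit inside a common proper subgroup. For $\mathrm{SO}_3(q)\cong\mathrm{PGL}_2(q)$, for $\mathrm{PSL}_2(7)$ and $A_6$, and for the small unitary groups, one expects either an affirmative answer to (B) (so $G$ is basic — a contradiction) or a collapse of $H$ to a $\{2,3\}$-group; whereas for $\mathrm{SU}_3(q^{1/2})$ with $f$ even, the point stabilisers $H_{v_i}$ split between a maximal parabolic (for isotropic $v_i$) and a non-degenerate-point stabiliser $\mathrm{GU}_2(q^{1/2})$ — two inequivalent maximal subgroups — so (B) fails and, as Section~\ref{sec:example} shows, $G$ then has no normal $2$-covering at all. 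Assembling the cases yields the stated dichotomy.

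The hardest point, and the reason this is stated as a conjecture rather than a theorem, is the uniform treatment of these last classes over all $q$: one must prove that \emph{whenever $\mathrm{SU}_3(q^{1/2})$ is not a normal subgroup of $H$}, every $h\in H$ with $\mathrm{Fix}_V(h)\ne0$ is conjugate into one fixed proper subgroup of $H$. This requires precise control of the ``eigenvalue-$1$'' conjugacy classes not just in the quasi-simple groups but in every $H$ sandwiched between $\mathrm{PSU}_3(q_0)$ and its extensions by diagonal and field automorphisms (modulo scalars), together with the genuinely exceptional small configurations $q\in\{4,25\}$ and $p=3$, and a careful accounting of how field automorphisms interact with the orbit structure on $V\setminus\{0\}$. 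I expect essentially all the difficulty to lie here, the reductions in the other Aschbacher classes being comparatively routine.
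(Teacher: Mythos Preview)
Your overall architecture --- Aschbacher case analysis plus the two reductions (A) and (B), which are exactly Definition~\ref{der} and Proposition~\ref{prop:gl} --- matches the paper. But you have inverted the difficulty profile, and this leaves a real gap.

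The cases you flag as ``decisive'' and essentially leave open (the class-$\mathcal{S}$ groups, the orthogonal part of $\mathcal{C}_8$, and the near-simple configurations around $\mathrm{PSL}_2(7)$, $A_6$, $A_7$, $\Omega_3(q)\cong\mathrm{PSL}_2(q)$) are in fact the easy ones, and they are dispatched purely by your criterion~(A), not by any eigenvalue-$1$ analysis on $V$. In each such case $H$ has a section isomorphic to one of these almost simple or quasisimple groups, all of which are known from~\cite{libro} to satisfy $\gamma=2$; this lifts to $\gamma(H)=2$, contradicting the non-basic hypothesis $\gamma(G/V)\ne2$. No control of $H^\ast$ or of the orbit structure on $V\setminus\{0\}$ is needed there. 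So the worry you raise in your last paragraph (and your belief that this is only a conjecture) is misplaced: the statement is a theorem, and those cases close immediately.

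The genuine work is in $\mathcal{C}_2$, and your sketch there is too thin. With image $C_3$ in $\mathrm{Sym}(3)$ you have $H\le D\rtimes\langle\iota_3,\sigma\rangle$, but this group is \emph{not} supersolvable (the chief factors inside $D/Z$ can have order $r^2$), so you cannot feed $H$ directly into Theorem~\ref{thrm:1}. The paper's missing step is this: if $H$ were not supersolvable, take a maximal subgroup $M$ of non-prime index, pass to $\bar H=H/\mathrm{core}_H(M)$, observe that $\bar H$ is a primitive solvable group with socle of order $r^2$, hence $\bar H\le\mathrm{AGL}_2(r)$ with $\gamma(\bar H)\ne2$, and apply Theorem~\ref{thrm:3} to force the point stabiliser $\bar M$ to be a $2$-group; this is then shown to be incompatible with $H\le D\rtimes\langle\iota_3,\sigma\rangle$. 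Only after this do Theorem~\ref{thrm:1} and Lemma~\ref{lemma:mercoledi0} give $H$ nilpotent with $H=\langle H^\ast\rangle$, and the projection argument (your $\pi_i$'s) together with Theorem~\ref{thrm:mainmain} finishes. Your ``case split on the primes dividing $q-1$'' does not substitute for this. A minor correction: in $\mathcal{C}_3$ a proper subgroup of $\Gamma\mathrm{L}_1(q^3)$ need not be transitive on $V\setminus\{0\}$; the case is handled by Theorem~\ref{thrm:mainmain} (or Theorem~\ref{thrm:2}) as you note in your alternative.
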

Computational evidence suggests that Theorem~\ref{conj} can be refined further, by proving that, in the first alternative, $H$ is a $3$-group.

We conclude this introductory section with a fundamental result due to Garonzi and Lucchini~\cite{GL}. Let $G = V \rtimes H$ be an affine primitive group. Define $H^*$ to be the set of elements in $H$ that fix some nonzero vector in $V$. That is,
\[
H^* = \{ h \in H \mid v h = v \text{ for some } v \in V \setminus \{0\} \} = \{ h \in H \mid \mathbf{C}_V(h) \ne 0 \}.
\]

\begin{proposition}[{\cite[Corollary~14]{GL}}]\label{prop:gl}
Let $G$ be as above. Then $G$ is basic (i.e., $\gamma(G) = 2$) if and only if there exists a proper subgroup $T < H$ such that
\[
H^* \subseteq \bigcup_{h \in H} T^h.
\]
\end{proposition}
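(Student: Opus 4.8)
The plan is to translate the covering condition into a statement about the linear action of $H$ on $V$, via a \emph{conjugacy dictionary} for $G=V\rtimes H$, and then read off both implications. Write an element of $G$ as a pair $(v,h)$ with $v\in V$, $h\in H$, acting on the affine space $V$ by $x\mapsto xh+v$, so that the group law is $(v_1,h_1)(v_2,h_2)=(v_1h_2+v_2,\,h_1h_2)$. A direct computation --- valid verbatim for semilinear $h$, since semilinear maps are additive --- shows that conjugating $(v,h)$ by the translation $(w,1)$ replaces $v$ by $v+w(1-h)$, and that conjugating by the linear element $(0,c)$ sends $(v,h)$ to $(vc,\,c^{-1}hc)$. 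Hence the $G$-conjugacy class of $(v,h)$ consists exactly of the pairs $(w,c^{-1}hc)$ with $c\in H$ and $w\in vc+\mathrm{Im}(1-h)c$. Two consequences will be used. First, $(v,h)$ is $G$-conjugate to the linear element $(0,h)$ --- equivalently, to an element of $H$, equivalently $(v,h)$ fixes some vector of $V$ --- if and only if $v\in\mathrm{Im}(1-h)$; and since $V$ is finite, $\mathrm{Im}(1-h)\ne V$ precisely when $\mathbf{C}_V(h)=\ker(1-h)\ne 0$, i.e.\ when $h\in H^*$. Second, for any $T\le H$ the subgroup $V\rtimes T$ contains every translation, so $(v,h)$ is $G$-conjugate into $V\rtimes T$ if and only if $h$ is $H$-conjugate into $T$, i.e.\ $h\in\bigcup_{c\in H}T^{c}$; in particular this is independent of $v$.

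For the ``if'' direction, suppose $T<H$ satisfies $H^*\subseteq\bigcup_{h\in H}T^{h}$, and set $H_1=V\rtimes T$ and $H_2=H$, both proper in $G$. Given $g=(v,h)\in G$: if $h\in H^*$ then $h\in\bigcup_cT^c$, so $g$ is conjugate into $H_1$ by the second consequence; if $h\notin H^*$ then $\mathrm{Im}(1-h)=V\ni v$, so $g$ is conjugate into $H_2$ by the first. Hence $G=\bigcup_gH_1^g\cup\bigcup_gH_2^g$, so $\gamma(G)\le 2$; and $\gamma(G)\ge 2$ by Jordan's theorem, whence $\gamma(G)=2$.

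For the ``only if'' direction, assume $\gamma(G)=2$, realised by two maximal subgroups $H_1,H_2<G$ (we may take them maximal, since enlarging a subgroup enlarges its set of conjugates). The first step is to locate $V$. A complement $C$ to $V$ satisfies $C^g\cap V=1$ for all $g$, because $V$ is normal, so $\bigcup_gC^g$ contains no nontrivial element of $V$; as the covering must contain the nonidentity translations, $H_1$ and $H_2$ are not both complements to $V$. On the other hand, since $V$ is the abelian socle of $G$ it acts regularly in any faithful primitive action, so every core-free maximal subgroup of $G$ is a complement to $V$. Hence one of the two, say $H_1$, is not core-free; its core is then a nontrivial normal subgroup, which contains the unique minimal normal subgroup $V$, so $V\le H_1$, and by Dedekind's law $G=VH$ gives $H_1=V\rtimes T$ with $T=H_1\cap H<H$ proper. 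Moreover $V\nleq H_2$, since otherwise $H_1/V$ and $H_2/V$ would witness $\gamma(G/V)\le 2$, contrary to the standing hypothesis $\gamma(G/V)\ne 2$; so $H_2$ is core-free, hence a point stabiliser of an affine-type primitive action of $G$ on a set of size $|V|$. Finally take any $h\in H^*$: by the conjugacy dictionary, the elements of $G$ with linear part $h$ that fix a point of this action are exactly those whose $V$-component lies in a single coset of $\mathrm{Im}(1-h)$, a \emph{proper} coset because $h\in H^*$; choosing $g=(v,h)$ with $v$ outside it yields an element conjugate into no point stabiliser of that action, hence not conjugate into $H_2$, hence conjugate into $H_1=V\rtimes T$, so $h\in\bigcup_{c\in H}T^{c}$. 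Thus $H^*\subseteq\bigcup_{h\in H}T^{h}$, as required.

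Essentially all the content sits in the conjugacy dictionary of the first paragraph together with the ``locate $V$'' step. The point demanding the most care is the latter: one must verify that the second maximal subgroup is a complement to $V$ with trivial core, so that its conjugates miss precisely the derangements of an affine-type action and the coset computation goes through uniformly --- the possible cohomological twist of that action merely translates the relevant coset and is therefore harmless. The remaining verifications (the additivity bookkeeping behind the conjugation formulas, Dedekind's law, and the invocation of Jordan's theorem) are routine.
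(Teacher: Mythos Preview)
The paper does not supply its own proof of this proposition: it is quoted verbatim from Garonzi--Lucchini~\cite[Corollary~14]{GL} and used as a black box, so there is no in-paper argument to compare against. Your proof is correct and self-contained. The conjugacy dictionary is set up properly, the ``if'' direction is clean, and in the ``only if'' direction you correctly isolate the two key points: (i) that one of the maximal components must contain $V$ (via the observation that core-free maximal subgroups are complements and two complements cannot cover the nontrivial translations), and (ii) that the other component $H_2$ is a genuine point stabiliser of an affine action.

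The only place that deserves a line of justification rather than a parenthetical remark is your ``single coset'' claim. Since $H_2$ need not be $G$-conjugate to $H$, one should check that for a complement $H_2=\{(\phi(k),k):k\in H\}$ with cocycle $\phi$, the element $(v,h)$ fixes a point of the action on $H_2\backslash G$ precisely when $v\in\phi(h)+\mathrm{Im}(1-h)$. This drops out immediately from writing the induced action on $V$ (as a transversal) as $w\mapsto wh+v-\phi(h)$ and solving for fixed points; your assertion that the cohomological twist ``merely translates the relevant coset'' is exactly right, but it would strengthen the write-up to display this one-line computation rather than leave it to the reader. Note also that you explicitly invoke $\gamma(G/V)\ne2$ to rule out $V\le H_2$; in the paper this is indeed a standing hypothesis (it is part of what ``basic'' means in Definition~\ref{der}, and Lemma~\ref{lemma:mercoledi0} verifies it before invoking the proposition), so your use of it is appropriate.
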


\section{Proof of Theorem~\ref{thrm:1}}\label{sec:easy}
Assume first that $G$ is nilpotent. If $\gamma(G) = 2$, then $G$ admits two maximal subgroups $H$ and $K$ such that
\[
G = \bigcup_{g \in G} H^g \cup \bigcup_{g \in G} K^g.
\]
As $G$ is nilpotent, $H, K \unlhd G$, and hence $G = H \cup K$; however, this is impossible. Thus, $\gamma(G) \ne 2$.

Conversely, suppose that $G$ is supersolvable and $\gamma(G) \ne 2$. We argue by contradiction and suppose that $G$ is not nilpotent. From~\cite[5.2.4]{robinson}, there exists a maximal subgroup $M$ of $G$ such that $M$ is not normal in $G$. Let 
\[
K = \bigcap_{g \in G} M^g
\]
be the core of $M$ in $G$, and let $\bar{G} = G/K$. As $\gamma(G) \ne 2$, we have $\gamma(\bar{G}) \ne 2$. Since $\bar{G}$ is a quotient of $G$, it is also supersolvable. Moreover, from~\cite[5.2.4]{robinson}, as $M/K$ is a maximal subgroup of $\bar{G}$ that is not normal, we deduce that $\bar{G}$ is not nilpotent. Therefore, replacing $G$ with $\bar{G}$ if necessary, we may assume that $G = \bar{G}$, that is, $K = 1$.

Let $N$ be a minimal normal subgroup of $G$. Since $M$ is maximal and core-free in $G$, we have $G = NM$ and $N \cap M=1$. Moreover, as $G$ is supersolvable, $N$ is cyclic of prime order. Let $C = {\bf C}_G(N)$. Now, $C \cap M \unlhd M$, and $C \cap M$ is also normalized by $N$. Thus, $C \cap M \unlhd NM=G$. As $M$ is core-free in $G$, we have $C\cap M=1$ and hence $C=N$ from Dedekind modular law. This shows that $M$ acts faithfully by conjugation on $N$. As $N$ is cyclic of prime order, $G = N \rtimes M$ is a Frobenius group with kernel $N$ and complement $M$. As Frobenius groups have normal covering number $2$, we deduce that $\gamma(G) = 2$, which is a contradiction. This concludes the proof of Theorem~\ref{thrm:1}.

\smallskip

 Proposition~\ref{prop:gl} admits a significant simplification when $H$ is nilpotent.
\begin{lemma}\label{lemma:mercoledi0}
Let $G=V\rtimes H$ be an affine primitive group with $H$ nilpotent. Then $G$ is non-basic (that is, $\gamma(G)\ne 2$) if and only if $H=\langle H^\ast\rangle$.
\end{lemma}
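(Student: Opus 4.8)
The plan is to use Proposition~\ref{prop:gl} and exploit the fact that for a nilpotent group $H$, conjugation does not help in covering. Recall that in a nilpotent group every subgroup is subnormal, but more to the point, every \emph{maximal} subgroup is normal; more generally, if $T<H$ is proper then $T$ is contained in a maximal subgroup, which is normal, so $\bigcup_{h\in H}T^h$ is contained in a proper \emph{normal} subgroup of $H$. Hence the union of conjugates of any proper subgroup of $H$ is never all of $H$, and in fact is contained in the (unique, by nilpotency) maximal subgroup structure — but we only need that it is a proper subset that is itself a proper subgroup when we pass to a maximal overgroup.

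\smallskip

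\emph{Proof of Lemma~\ref{lemma:mercoledi0}.} First suppose $H=\langle H^\ast\rangle$. Assume for contradiction that $G$ is basic. By Proposition~\ref{prop:gl} there is a proper subgroup $T<H$ with $H^\ast\subseteq\bigcup_{h\in H}T^h$. Choose a maximal subgroup $M$ of $H$ with $T\le M$. Since $H$ is nilpotent, $M\unlhd H$, so $T^h\le M$ for every $h\in H$, and therefore $H^\ast\subseteq M$. But then $\langle H^\ast\rangle\le M<H$, contradicting $H=\langle H^\ast\rangle$. Hence $G$ is non-basic.

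\smallskip

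Conversely, suppose $H\ne\langle H^\ast\rangle$, and set $T=\langle H^\ast\rangle<H$. Since $H^\ast$ is closed under $H$-conjugation (as $\mathbf{C}_V(h^g)=\mathbf{C}_V(h)g$ has the same dimension, in particular is nonzero exactly when $\mathbf{C}_V(h)$ is), the subgroup $T$ is normal in $H$; in any case $H^\ast\subseteq T=T^h$ for all $h\in H$, so $T$ witnesses the criterion in Proposition~\ref{prop:gl}. Therefore $G$ is basic, i.e. $\gamma(G)=2$. Combining the two implications gives the equivalence. $\qed$

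\smallskip

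I do not expect a genuine obstacle here: the only point requiring a word of care is that one must pass from the abstract proper subgroup $T$ supplied by Proposition~\ref{prop:gl} to a \emph{normal} subgroup before concluding, which is exactly where nilpotency of $H$ is used (via the fact that maximal subgroups of a nilpotent group are normal). It is worth remarking that the hypothesis that $H$ is nilpotent cannot be dropped: in a non-nilpotent $H$ the conjugates of a proper subgroup may well cover a set not contained in any proper subgroup, which is precisely the phenomenon that makes the general affine case delicate.
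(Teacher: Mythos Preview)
Your proof is correct and follows essentially the same approach as the paper: apply Proposition~\ref{prop:gl}, then use nilpotency of $H$ to replace the proper subgroup $T$ by a maximal (hence normal) overgroup, reducing the covering condition to containment of $H^\ast$ in some proper subgroup. The only point the paper makes explicit that you leave implicit is that $H$ nilpotent forces $\gamma(H)\ne 2$ by Theorem~\ref{thrm:1}, so that the first three conditions of Definition~\ref{der} are indeed satisfied and Proposition~\ref{prop:gl} applies in the stated form.
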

\begin{proof}
Since $G$ is primitive, $V$ is the unique minimal normal subgroup of $G$. Moreover, since $H$ is nilpotent, by Theorem~\ref{thrm:1}, we have $\gamma(G/V)=\gamma(H)\ne 2$. Therefore, from Proposition~\ref{prop:gl}, $G$ is basic if and only if $H^\ast$ is contained in $\bigcup_{h \in H} T^h$ for some subgroup $T < H$. Replacing $T$ with a maximal subgroup of $H$ if necessary, we may suppose that $T\unlhd H$. Therefore,  $G$ is basic if and only if $H^\ast$ is contained in $T$ for some subgroup $T < H$.
\end{proof}

\section{Proof of Theorem~\ref{thrm:2}}\label{sec:thrm2}

In this section, we prove a slightly stronger version of Theorem~\ref{thrm:2}. We begin by introducing some notation that will be used throughout the section.

\begin{notation}\label{not}{\rm
Let $p$ be a prime number, let $f$ be a positive integer, let $q = p^f$, let $\mathbb{F}_q$ be the finite field of cardinality $q$, let $\lambda$ be a generator of the multiplicative group of $\mathbb{F}_q$, and let $\sigma$ be the Frobenius automorphism of $\mathbb{F}_q$. Thus $x^\sigma = x^p$ for all $x \in \mathbb{F}_p$.

With this notation, we have:
\begin{align*}
\Gamma\mathrm{L}_1(q) &= \mathbb{F}_q^\ast \rtimes \mathrm{Aut}(\mathbb{F}_q) = \langle \lambda \rangle \rtimes \langle \sigma \rangle,\\
\mathrm{A}\Gamma\mathrm{L}_1(q) &= \mathbb{F}_q \rtimes \Gamma\mathrm{L}_1(q) = \mathbb{F}_q \rtimes \langle \lambda, \sigma \rangle.
\end{align*}
Note that $\Gamma\mathrm{L}_1(q)$ is metacyclic, and hence so is each of its subgroups.

Let $H$ be a subgroup of $\Gamma\mathrm{L}_1(q)$. In particular, we may write $H = \langle \lambda^d, \sigma^j \lambda^i \rangle$, where $d$ divides $q - 1$, $j$ divides $f$, and $i \in \mathbb{Z}$. We may assume that $\langle \lambda^d \rangle = H \cap \langle \lambda \rangle$. Since $|H : \langle \lambda^d \rangle| = |\langle \sigma^j \rangle|$, it follows that
$$(\sigma^j\lambda^i)^{f/j} = \lambda^{i\frac{p^f - 1}{p^j - 1}} \in H \cap \langle \lambda \rangle = \langle \lambda^d \rangle,$$
that is,
\begin{equation}\label{eq:neq1}
d\, \text{ divides }\, i\frac{p^f - 1}{p^j - 1}.
\end{equation}

An arbitrary element $h$ of $H$ can be written (uniquely) as $h=(\sigma^j\lambda^i)^k\lambda^{d\ell}$, for a unique $k\in \{0,\ldots,f/j-1\}$ and a unique $\ell\in \{0,\ldots,(p^f-1)/d-1\}$. We have
\begin{equation}\label{often}
h=(\sigma^j\lambda^i)^k\lambda^{d\ell}=\sigma^{ijk}\lambda^{i\frac{p^{jk}-1}{p^j-1}+d\ell}.
\end{equation}
We use very often this computation in this section.
}
\end{notation}

\begin{lemma}\label{lemma:mercoledi1}
Let $H = \langle \lambda^d, \sigma^j \lambda^i \rangle$ be as in Notation~$\ref{not}$ and let $h = (\sigma^j \lambda^i)^k \lambda^{d\ell} \in H$. Then $h \in H^\ast$ if and only if $p^{\gcd(f,jk)} - 1$ divides $i(p^{jk} - 1)/(p^j - 1) + d\ell$.
\end{lemma}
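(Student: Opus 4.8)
The plan is to make the action of $h$ on the natural module $V=\mathbb{F}_q$ completely explicit and then translate the existence of a nonzero fixed point into a power-residue condition inside the cyclic group $\mathbb{F}_q^\ast=\langle\lambda\rangle$. Unwinding \eqref{often}, the element $h$ acts on $V=\mathbb{F}_q$ as the semilinear map $x\mapsto\lambda^{a}x^{p^{jk}}$, where
\[
a=i\frac{p^{jk}-1}{p^{j}-1}+d\ell=i\bigl(1+p^{j}+\cdots+p^{j(k-1)}\bigr)+d\ell\in\mathbb{Z}.
\]
Hence $h\in H^\ast$, that is $\mathbf{C}_V(h)\neq 0$, if and only if there exists $x\in\mathbb{F}_q^\ast$ with $\lambda^{a}x^{p^{jk}}=x$, equivalently $x^{p^{jk}-1}=\lambda^{-a}$. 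So $h\in H^\ast$ precisely when $\lambda^{-a}$ lies in the image of the endomorphism $y\mapsto y^{p^{jk}-1}$ of the group $\mathbb{F}_q^\ast$.

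Next I would describe this image using the standard identity $\gcd(p^{s}-1,p^{t}-1)=p^{\gcd(s,t)}-1$ for non-negative integers $s,t$ (which also handles the degenerate case $f\mid jk$, where $p^{jk}\equiv 1\pmod{p^{f}-1}$ and $h$ acts as the scalar $\lambda^{a}$). Since $\mathbb{F}_q^\ast$ is cyclic of order $p^{f}-1$, the image of $y\mapsto y^{p^{jk}-1}$ is its unique subgroup of order $(p^{f}-1)/\gcd(p^{jk}-1,p^{f}-1)=(p^{f}-1)/(p^{\gcd(f,jk)}-1)$, which is exactly the set of $(p^{\gcd(f,jk)}-1)$-th powers in $\mathbb{F}_q^\ast$. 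Therefore $h\in H^\ast$ if and only if $\lambda^{-a}$, equivalently $\lambda^{a}$, is a $(p^{\gcd(f,jk)}-1)$-th power in $\langle\lambda\rangle$.

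Finally, because $\gcd(f,jk)$ divides $f$ we have $\gcd(p^{\gcd(f,jk)}-1,\,p^{f}-1)=p^{\gcd(f,jk)}-1$, and hence $\lambda^{a}$ is a $(p^{\gcd(f,jk)}-1)$-th power in the cyclic group $\langle\lambda\rangle$ of order $p^{f}-1$ if and only if $p^{\gcd(f,jk)}-1$ divides $a$. Substituting the value of $a$ gives that $h\in H^\ast$ if and only if $p^{\gcd(f,jk)}-1$ divides $i(p^{jk}-1)/(p^{j}-1)+d\ell$, which is the assertion.

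I do not expect a genuine obstacle: every step is routine linear algebra over $\mathbb{F}_q$ combined with elementary facts about cyclic groups. The only points deserving a little care are the gcd identity $\gcd(p^{s}-1,p^{t}-1)=p^{\gcd(s,t)}-1$, the identification of the image of $y\mapsto y^{p^{jk}-1}$ with the subgroup of $(p^{\gcd(f,jk)}-1)$-th powers, and the boundary case $f\mid jk$, where the criterion correctly degenerates to $p^{f}-1\mid a$, i.e.\ $\lambda^{a}=1$.
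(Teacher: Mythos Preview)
Your proof is correct and essentially the same as the paper's: both unwind~\eqref{often} to see that $h$ sends $\lambda^t$ to $\lambda^{tp^{jk}+a}$ with $a=i(p^{jk}-1)/(p^j-1)+d\ell$, and then reduce the fixed-point question to whether $\gcd(p^{jk}-1,p^f-1)=p^{\gcd(f,jk)}-1$ divides $a$. The only cosmetic difference is that the paper phrases this as solvability of the linear congruence $t(p^{jk}-1)\equiv -a\pmod{p^f-1}$, whereas you phrase it as $\lambda^{-a}$ lying in the image of the $(p^{jk}-1)$-th power map on $\langle\lambda\rangle$.
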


\begin{proof}
By definition, $h \in H^\ast$ if and only if there exists $t \in \mathbb{Z}$ such that $\lambda^t h = \lambda^t$. From~\eqref{often},
we have
$$\lambda^t h = \lambda^{t p^{jk} + i\frac{p^{jk} - 1}{p^j - 1} + d\ell}.$$
Therefore, $\lambda^t h = \lambda^t$ if and only if
$$t(p^{jk} - 1) \equiv -i\frac{p^{jk} - 1}{p^j - 1} - d\ell \pmod{p^f - 1}.$$
This congruence has a solution in $t$ if and only if $\gcd(p^{jk} - 1, p^f - 1)$ divides $-i\frac{p^{jk} - 1}{p^j - 1} - d\ell$.

Finally, by an inductive argument (see also~\cite[Lemma~2.2~(1)]{libro}), we have
$$\gcd(p^{jk} - 1, p^f - 1) = p^{\gcd(jk,f)} - 1$$
as claimed.
\end{proof}

\begin{lemma}\label{lemma:mercoledi2}
Let $H = \langle \sigma^j \lambda^i, \lambda^d \rangle$ be as in Notation~$\ref{not}$. If $H = \langle H^\ast \rangle$, then $p^j - 1$ divides both $d$ and $i$.

In particular, replacing $H$ by a suitable $\langle \lambda \rangle$-conjugate, we may assume $H = \langle \sigma^j,\lambda^d \rangle$.
\end{lemma}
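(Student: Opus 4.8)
The plan is to show, unconditionally, that $H^\ast\subseteq\langle\sigma^j,\lambda^{p^j-1}\rangle$; under the hypothesis $H=\langle H^\ast\rangle$ this forces $H\le\langle\sigma^j,\lambda^{p^j-1}\rangle$, and both divisibility statements — as well as the ``in particular'' clause — then fall out by examining how $H$ sits inside this subgroup.

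First I would take an arbitrary $h\in H^\ast$ and write it, as in Notation~\ref{not}, as $h=(\sigma^j\lambda^i)^k\lambda^{d\ell}$; by \eqref{often} its field-automorphism part is $\sigma^{jk}$ and $h=\sigma^{jk}\lambda^{m}$ with $m=i\,\frac{p^{jk}-1}{p^j-1}+d\ell$. The crucial elementary point is that $j$ divides both $f$ and $jk$, hence $j\mid\gcd(f,jk)$, and therefore $p^j-1$ divides $p^{\gcd(f,jk)}-1$. By Lemma~\ref{lemma:mercoledi1}, $p^{\gcd(f,jk)}-1$ divides $m$; consequently $p^j-1\mid m$, i.e. $h\in\langle\sigma^j,\lambda^{p^j-1}\rangle$. (One should note in passing that $\langle\sigma^j,\lambda^{p^j-1}\rangle$ really is a subgroup of $\Gamma\mathrm{L}_1(q)$, since $\sigma\lambda^{p^j-1}\sigma^{-1}=(\lambda^{p^j-1})^{p}\in\langle\lambda^{p^j-1}\rangle$.)

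Granting $H=\langle H^\ast\rangle\le\langle\sigma^j,\lambda^{p^j-1}\rangle$, I would conclude as follows. Intersecting with $\langle\lambda\rangle$ gives $\langle\lambda^d\rangle=H\cap\langle\lambda\rangle\le\langle\lambda^{p^j-1}\rangle$, that is, $p^j-1\mid d$. Next, the elements of $\langle\sigma^j,\lambda^{p^j-1}\rangle$ whose field-automorphism part equals $\sigma^j$ are exactly those in the coset $\sigma^j\langle\lambda^{p^j-1}\rangle$; since $\sigma^j\lambda^i\in H$ lies in this coset, $\lambda^i\in\langle\lambda^{p^j-1}\rangle$, i.e. $p^j-1\mid i$. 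Finally, since $\gcd(p^j-1,p^f-1)=p^j-1$ divides $i$, a routine computation shows that $\sigma^j\lambda^i$ is $\langle\lambda\rangle$-conjugate to $\sigma^j$; conjugation by a power of $\lambda$ fixes $\lambda^d$, so the corresponding $\langle\lambda\rangle$-conjugate of $H$ equals $\langle\sigma^j,\lambda^d\rangle$.

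I do not expect a genuine obstacle: the whole argument turns on the single observation that $j\mid\gcd(f,jk)$, hence $p^j-1\mid p^{\gcd(f,jk)}-1$, after which everything is bookkeeping in the metacyclic group $\Gamma\mathrm{L}_1(q)$. The only points needing mild care are the verification that $\langle\sigma^j,\lambda^{p^j-1}\rangle$ is closed under multiplication and the description of its elements with a prescribed field-automorphism part, both of which follow immediately from the commutation relation $\sigma\lambda\sigma^{-1}=\lambda^{p}$.
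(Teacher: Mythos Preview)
Your proof is correct and follows essentially the same route as the paper's: both arguments rest on the observation that every $h=\sigma^{jk}\lambda^{m}\in H^\ast$ satisfies $p^j-1\mid m$ (via Lemma~\ref{lemma:mercoledi1} together with $j\mid\gcd(f,jk)$), and that this property is stable under multiplication. You package this as the containment $H^\ast\subseteq\langle\sigma^j,\lambda^{p^j-1}\rangle$ into an explicit subgroup, which is a slightly cleaner formulation than the paper's induction on the length of a product of elements of $H^\ast$, but the content is identical.
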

\begin{proof}
Set $t=p^j$, $\tau=\sigma^j$ and $\mu=\lambda^i$. By hypothesis $\langle H^*\rangle=H$, in particular $\lambda^d$ is the
  product of elements $(\tau\mu)^k\lambda^{d\ell}$ satisfying the clause in Lemma~\ref{lemma:mercoledi1}.
  We prove by induction on $r$ where $\lambda^d=\prod_{i=1}^{r}(\tau\mu)^{k_i}\lambda^{d\ell_i}$ that
  the RHS\footnote{the Right Hand Side.} has shape $\tau^K\lambda^a$, where $K=\sum_{i=1}^{r}k_i$ and $t-1$ divides $a$.
 By Equation \ref{often} $(\tau\mu)^k\lambda^{d\ell}=\tau^k\lambda^a$, where $a=t^kd\ell+it\frac{t^k-1}{t-1}$ and we are done when $r=1$.
 Now 
 $$\tau^x\lambda^y\tau^z\lambda^w=\tau^{x+z}\lambda^{yt^z+w}$$
 shows that the exponents of $\tau$ add up and $y,w\equiv 0 \mod t-1$ force $yt^z+w\equiv 0\mod t-1$. It follows that $t-1$ divides $d$.
 Analogously $\tau\mu=\tau^K\lambda^a$, thus $t-1\mid a=i$. Now conjugation via $\lambda^\ell$ where $(t-1)\ell-i=0$ fixes $\lambda^d$ and maps $\sigma^j\lambda^i$ to $\sigma^j$.
\end{proof}

Given a positive integer $n$, we let $\pi(n)$ denote the set of prime divisors of $n$.

\begin{lemma}\label{lemma:mercoledi3}
Let $H=\langle \sigma^j\lambda^i,\lambda^d\rangle$ be as in Notation~$\ref{not}$. Then $H$ nilpotent if and only if $\pi((p^f-1)/d)\subseteq \pi(p^j-1)$.
\end{lemma}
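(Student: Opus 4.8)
The plan is to analyze the structure of $H = \langle \sigma^j\lambda^i, \lambda^d\rangle$ as a metacyclic group with cyclic normal subgroup $A = \langle\lambda^d\rangle = H \cap \langle\lambda\rangle$ of order $n := (p^f-1)/d$, and cyclic quotient $H/A$ of order $f/j$. Since $H$ is metacyclic, and a finite group is nilpotent if and only if it is the direct product of its Sylow subgroups, the natural strategy is: first reduce to the question of when the (cyclic) Sylow subgroups of $A$ are centralized appropriately by the action of $H/A$, then translate that centralization condition into the arithmetic condition $\pi(n)\subseteq\pi(p^j-1)$. The key observation driving everything is that $\sigma^j$ (or $\sigma^j\lambda^i$) acts on $\langle\lambda\rangle\cong\mathbb{Z}/(p^f-1)$ by raising to the $p^j$-th power, so the conjugation action of a generator of $H/A$ on $\lambda^d$ is $x \mapsto x^{p^j}$.

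For the concrete steps: first I would note that $H$ is nilpotent if and only if $H$ is the direct product of its Sylow $r$-subgroups over $r \in \pi(|H|)$, and since $A\unlhd H$ with $A$ cyclic, for each prime $r$ the Sylow $r$-subgroup $A_r$ of $A$ is normal in $H$; nilpotency of $H$ is then equivalent to $A_r$ being central in $H$ for every prime $r$ dividing $|A| = n$ (the part of $H$ coming from the quotient $H/A$, a cyclic $\{$primes dividing $f/j\}$-group, causes no obstruction on its own since one can always split off its Sylow subgroups once the $A$-part is central — here one must be slightly careful, but the point is that the only possible failure of nilpotency comes from a Sylow $r$-subgroup of the quotient acting nontrivially on $A_r$ or on $A_{r'}$). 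Second, I would compute the action: a generator $\bar h$ of $H/A$ lifts to $\sigma^j\lambda^i$, which conjugates $\lambda^d$ to $(\lambda^d)^{p^j} = \lambda^{dp^j}$; thus $\bar h$ acts on $A \cong \mathbb{Z}/n$ as multiplication by $p^j$. Hence $A_r$ is centralized by $H$ if and only if $p^j \equiv 1 \pmod{r^{v_r(n)}}$, i.e.\ $r^{v_r(n)}$ divides $p^j - 1$.

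Third, I would assemble these: $H$ is nilpotent iff $r^{v_r(n)} \mid p^j-1$ for every $r\in\pi(n)$, which upon inspection is \emph{not} quite $\pi(n)\subseteq\pi(p^j-1)$ — so the real content is to show these two conditions coincide, and this is where one uses that $n = (p^f-1)/d$ and $p^j - 1 \mid p^f - 1$: if a prime $r$ divides $p^j-1$ then I claim $r^{v_r(n)} \mid p^j - 1$ as well. This is the main obstruction and rests on a lifting-the-exponent style fact: writing $f = jm$, one has $v_r(p^f-1) = v_r(p^j-1) + v_r(m)$ for odd $r$ (and a similar adjusted statement for $r=2$), so $v_r(n) = v_r(p^f-1) - v_r(d) \le v_r(p^j-1) + v_r(m) - v_r(d)$; one then has to control $v_r(d)$ using the constraint~\eqref{eq:neq1} together with $|H/A| = f/j = m$ and the fact (from Lemma~\ref{lemma:mercoledi2}'s proof or directly) that $r\mid m$ forces $r\mid p^j-1$ is \emph{false} in general — so instead the clean route is: observe $v_r(m) \le v_r(p^j-1)$ is automatic when $r \mid p^j - 1$ only if $r$ is odd and $r \nmid \ldots$; rather, I would argue directly that $v_r(n) \le v_r(p^j-1)$ by showing $d$ is divisible by $(p^f-1)/\gcd$-type quantities forced by~\eqref{eq:neq1}. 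Concretely: from~\eqref{eq:neq1}, $d \mid i(p^f-1)/(p^j-1)$, but more usefully one shows $(p^f-1)/(p^j-1) \mid n \cdot(\text{unit})$ is not what's needed; the cleanest formulation is that $A = H\cap\langle\lambda\rangle$ has index $m = f/j$ in $H$, and the transfer/commutator structure forces $n \mid (p^f-1)/(p^j-1)\cdot\gcd(\ldots)$. I expect the bookkeeping here — pinning down exactly which power of each prime $r \in \pi(n)$ can appear, and checking the prime $2$ separately because of the exceptional behavior in lifting-the-exponent — to be the genuinely delicate part of the argument; the group-theoretic skeleton (nilpotent $\iff$ Sylow subgroups central $\iff$ $p^j\equiv 1$ modulo the relevant prime powers) is routine.
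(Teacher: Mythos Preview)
Your proposal contains a genuine error in the third step. The claim that ``$H$ is nilpotent if and only if $A_r$ is central in $H$ for every prime $r$ dividing $n$'' is false: centrality of $A_r$ forces the Sylow $r$-subgroup of $H$ to be \emph{abelian}, which is strictly stronger than nilpotency. Concretely, take $p=3$, $f=2$, $j=1$, $d=1$, $i=0$. Then $n=8$, $H=\langle\sigma,\lambda\rangle$ has order $16$, hence is a nilpotent $2$-group, yet $A_2=A=\langle\lambda\rangle$ is not central since $\lambda^\sigma=\lambda^3\ne\lambda$. In this example $r^{v_r(n)}=8$ does not divide $p^j-1=2$, so your derived condition fails while the lemma's condition $\pi(n)=\{2\}\subseteq\{2\}=\pi(p^j-1)$ holds. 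Your subsequent attempt to reconcile the two conditions via lifting-the-exponent was bound to fail, because they are genuinely inequivalent.

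The fix within your framework is this: nilpotency of $H$ is equivalent to the $r'$-part of $H$ centralizing $A_r$ for each prime $r$, i.e.\ to the image of $p^j$ in $(\mathbb{Z}/r^{v_r(n)}\mathbb{Z})^\times$ having $r$-power order. For odd $r$ this unit group is cyclic, and its unique Sylow $r$-subgroup is $\{x:x\equiv 1\pmod r\}$, so the condition becomes simply $r\mid p^j-1$; for $r=2$ the unit group is a $2$-group and the condition is vacuous (and $2\mid p^j-1$ holds automatically once $2\mid n$, since then $p$ is odd). Thus the target condition $\pi(n)\subseteq\pi(p^j-1)$ drops out immediately, with no LTE needed.

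The paper, by contrast, bypasses all of this with a direct computation of the lower central series: one checks $[\sigma^j\lambda^i,\lambda^d]=\lambda^{d(1-p^j)}$, whence $\gamma_\kappa(H)=\langle\lambda^{d(1-p^j)^{\kappa-1}}\rangle$, and this vanishes for some $\kappa$ precisely when every prime dividing $(p^f-1)/d$ also divides $p^j-1$. This is shorter and avoids the Sylow bookkeeping entirely.
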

\begin{proof}
We denote by $\gamma_\kappa(H)$ the $\kappa$th term in the lower central series of $H$. As 
\begin{align*}
[\sigma^j\lambda^i,\lambda^d]&=
\lambda^{-i}\sigma^{-j}\lambda^{-d}\sigma^j\lambda^i\lambda^d=
\lambda^{-i}\lambda^{-dp^j}\lambda^{i+d}=
\lambda^{d(1-p^j)},
\end{align*} we deduce that $\gamma_2(H)$ is generated by $\lambda^{d(1-p^j)}$. Therefore, arguing inductively, we obtain that $\gamma_{\kappa}(H)$ is generated by $\lambda^{d(1-p^j)^{\kappa-1}}$. Therefore, $H$ is nilpotent if and only if $\pi(p^f-1)\subseteq \pi(d(p^j-1))$, that is, $\pi((p^f-1)/d)\subseteq\pi(p^j-1)$.\end{proof}

\begin{lemma}\label{lemma:mercoledi33}
Let $H=\langle \sigma^j\lambda^i,\lambda^d\rangle$ be as in Notation~$\ref{not}$ with $H$ nilpotent. Then, each prime in $\pi((p^f-1)/d)\setminus\pi(f/j)$ divides $|H:\langle H^\ast\rangle|$.

In particular, if $H=\langle H^\ast\rangle$, then $\pi((p^f-1)/d)\subseteq \pi(f/j)$.
\end{lemma}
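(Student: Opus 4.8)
The plan is to fix a prime $r\in\pi((p^f-1)/d)\setminus\pi(f/j)$ and exhibit a subgroup of index exactly $r$ in $H$ that contains $H^\ast$. As $\langle H^\ast\rangle$ then lies in that subgroup, $r$ divides $|H:\langle H^\ast\rangle|$, which is the first assertion; the second is immediate, since $H=\langle H^\ast\rangle$ would force $|H:\langle H^\ast\rangle|=1$, divisible by no prime.

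First I would record the arithmetic forced by nilpotency. By Lemma~\ref{lemma:mercoledi3}, $r\mid p^j-1$; with $r\nmid f/j$ and $\frac{p^f-1}{p^j-1}=1+p^j+\cdots+p^{(f/j-1)j}\equiv f/j\pmod{p^j-1}$ this yields $r\nmid\frac{p^f-1}{p^j-1}$, so $v_r(p^f-1)=v_r(p^j-1)$. The same computation, using $j\mid\gcd(f,jk)\mid f$ and $\gcd(f,jk)/j\mid f/j$, gives $v_r\bigl(p^{\gcd(f,jk)}-1\bigr)=v_r(p^f-1)$ for every $k$, and also $\frac{p^{jk}-1}{p^j-1}\equiv k\pmod r$. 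Put $v=v_r(d)$ and $a=v_r((p^f-1)/d)\ge 1$, so $v_r(p^f-1)=v+a$; moreover \eqref{eq:neq1} forces $v_r(i)\ge v$, so we may write $i=r^v i_1$ and $d=r^v d_1$ with $r\nmid d_1$.

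Next I would build the desired subgroup as $\ker\rho$ for a homomorphism $\rho\colon H\to\mathbb{Z}/r\mathbb{Z}$. As $r\nmid f/j=|H:\langle\lambda^d\rangle|$, the Sylow $r$-subgroup of the nilpotent group $H$ sits inside the cyclic group $\langle\lambda^d\rangle$. I pass to $\bar H=H/\langle\lambda^{dr}\rangle$, which makes sense because $dr\mid p^f-1$ and every subgroup of the cyclic normal subgroup $\langle\lambda\rangle$ is normal in $\Gamma\mathrm{L}_1(q)$. Then $\bar H$ has order $fr/j$, so $\overline{\langle\lambda^d\rangle}$, of order $r$, is the Sylow $r$-subgroup of the nilpotent group $\bar H$ and hence a direct factor; let $\rho$ be the composite of the quotient map $H\to\bar H$, the projection onto $\overline{\langle\lambda^d\rangle}$, and the identification $\overline{\lambda^d}\mapsto 1$. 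Then $\rho(\lambda^{d\ell})=\ell$; and since $(\sigma^j\lambda^i)^{f/j}=\lambda^{i(p^f-1)/(p^j-1)}\in\langle\lambda^d\rangle$ by \eqref{eq:neq1}, writing $\lambda^{i(p^f-1)/(p^j-1)}=\lambda^{dm}$ with $m\in\mathbb{Z}$ gives $(f/j)\,\rho(\sigma^j\lambda^i)=m$, so $\rho(\sigma^j\lambda^i)=(f/j)^{-1}m$; note that $d_1m=i_1\frac{p^f-1}{p^j-1}\equiv i_1(f/j)\pmod r$.

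The crux, and the step I expect to be the real work, is to check that $\rho$ kills $H^\ast$. Let $h=(\sigma^j\lambda^i)^k\lambda^{d\ell}\in H^\ast$. By Lemma~\ref{lemma:mercoledi1} and the valuations above, $r^{v+a}$ divides $i\frac{p^{jk}-1}{p^j-1}+d\ell=r^v\bigl(i_1\frac{p^{jk}-1}{p^j-1}+d_1\ell\bigr)$, so $r$ divides $i_1\frac{p^{jk}-1}{p^j-1}+d_1\ell\equiv i_1k+d_1\ell\pmod r$. Since $\rho$ is a homomorphism, $\rho(h)=k\rho(\sigma^j\lambda^i)+\ell$, and multiplying the target congruence $\rho(h)\equiv 0\pmod r$ by the unit $d_1(f/j)$ turns it into $k\,d_1m+(f/j)d_1\ell\equiv(f/j)(i_1k+d_1\ell)\equiv 0\pmod r$; hence $\rho(h)=0$. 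Therefore $\langle H^\ast\rangle\le\ker\rho$, a subgroup of index $r$, and the proof is done. The device enabling the crux is the simultaneous extraction of $r^{v_r(d)}$ from $i$ and from $d$ — legitimate precisely because \eqref{eq:neq1} gives $v_r(i)\ge v_r(d)$ — which makes $d_1$ a unit modulo $r$ and reduces the whole argument to a single congruence modulo $r$, sidestepping the $r$-adic expansion of $\frac{p^{jk}-1}{p^j-1}$ (which is delicate when $r=2$). Finally, one checks that the degenerate cases $f/j=1$ and $\langle\lambda^{dr}\rangle=1$ cause no trouble.
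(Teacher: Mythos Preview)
Your argument is correct, but it takes a genuinely different route from the paper's. The paper works structurally: since $H$ is nilpotent it splits as $R\times Q$ with $R$ the Sylow $r$-subgroup and $Q$ the Hall $r'$-subgroup, and the key observation is that $H^\ast$ is closed under taking powers (if $h$ fixes a nonzero vector then so does $h^n$), so writing $h=g_rg_{r'}$ one gets $g_r,g_{r'}\in H^\ast$ and hence $H^\ast\subseteq R^\ast\times Q^\ast$. Because $r\nmid f/j=|H:\langle\lambda^d\rangle|$, the normal Sylow $R$ lies in $\langle\lambda^d\rangle$, whose nonidentity elements act fixed-point-freely; thus $R^\ast=\{1\}$ and $\langle H^\ast\rangle\le Q$, giving $r\mid |H:\langle H^\ast\rangle|$ immediately. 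Your approach instead constructs an explicit surjection $\rho\colon H\to\mathbb{Z}/r\mathbb{Z}$ and verifies, via the arithmetic criterion of Lemma~\ref{lemma:mercoledi1} together with the nilpotency criterion of Lemma~\ref{lemma:mercoledi3}, that $H^\ast\subseteq\ker\rho$. The paper's proof is shorter and more conceptual, never touching $r$-adic valuations or the explicit form of $H^\ast$, and the inclusion $H^\ast\subseteq R^\ast\times Q^\ast$ is reused later (e.g.\ in the proofs of Theorems~\ref{thrm:3} and~\ref{conj}); your proof, by contrast, pins down exactly which index-$r$ normal subgroup contains $\langle H^\ast\rangle$ and makes the dependence on \eqref{eq:neq1} and on Lemma~\ref{lemma:mercoledi3} transparent, at the cost of a fair amount of arithmetic bookkeeping.
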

Given a group $G$ and a group element $g$, we let ${\bf o}(g)$ denote the order of $g$. 
\begin{proof}
Let $r$ be a prime number. Since $H$ is nilpotent, we have $H=R\times Q$, where $R$ is a Sylow $r$-subgroup of $H$ and $Q$ is a Hall $r'$-subgroup of $H$. Let $h\in H^\ast$ with $h=g_rg_{r'}$, where $g_r\in R$ and $g_{r'}\in Q$. Let $\alpha,\beta\in\mathbb{Z}$ with $\alpha{\bf o}(g_r)+\beta{\bf o}(g_{r'})=1$. We have
\begin{align*}
H^\ast\backin& h^{\beta{\bf o}(g_{r'})}=g_r^{\beta{\bf o}(g_{r'})}g_{r'}^{\beta{\bf o}(g_{r'})}=g_r^{1-\alpha{\bf o}(g_r)}=g_r,\\
H^\ast\backin& h^{\alpha{\bf o}(g_{r})}=g_r^{\alpha{\bf o}(g_{r})}g_{r'}^{\alpha{\bf o}(g_{r})}=g_{r'}^{1-\beta{\bf o}(g_{r'})}=g_{r'}.
\end{align*}
Thus $g_r\in H^\ast\cap R=R^\ast$, $g_{r'}\in H^\ast\cap Q=Q^\ast$ and $g_rg_{r'}=h\in R^\ast\times Q^\ast$. Since $h$ is an arbitrary element of $H^\ast$, we have shown $H^\ast\subseteq R^\ast\times Q^\ast$.

Let $r\in\pi((p^f-1)/d)\setminus\pi(f/j)$. Thus $|R|$ is relatively prime to $|H:\langle \lambda^d\rangle|$ and hence $R\le\langle\lambda^d\rangle$. Since the non-identity elements in $\langle\lambda^d\rangle$ fix no non-zero vector, we deduce $R^\ast=1$ and $\langle R^\ast\rangle=1$. Thus $|H:\langle H^\ast\rangle|=|H:\langle Q^\ast\rangle|$ is divisible by $r$.
\end{proof}

Given a positive integer $n$ and a prime number $r$, we let $v_r(n)$ be the largest integer with $r^{v_r(n)}$ dividing $n$. We now need two numerical results.

\begin{lemma}\label{lemma:mercoledi5}
Let $r$ be a prime number, let $n$ be a positive integer and let $x\in\mathbb{Z}$ with $v_r(x)\ge 1$. Then
\begin{align*}
(1+x)^n\equiv
\begin{cases}
1+nx(x+2)/2\pmod {r^{v_r(nx(x+2))}}&\textrm{when }r=2, v_r(x)=1,\\
&n \textrm{ is twice an odd number},\\
1+nx\pmod{r^{v_r(nx)+1}}&\textrm{otherwise}.
\end{cases}
\end{align*}
\end{lemma}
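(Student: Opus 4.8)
The statement is a refinement of the "lifting the exponent" lemma, so the natural approach is induction on $v_r(n)$, with the base case handled directly by a binomial expansion. First I would write $(1+x)^n = 1 + nx + \binom{n}{2}x^2 + \sum_{k\ge 3}\binom{n}{k}x^k$ and analyze the $r$-adic valuation of each term. The term $\binom{n}{2}x^2 = \frac{n(n-1)}{2}x^2$ has valuation $v_r(n) + 2v_r(x) - [r=2]$ when $r=2$ (since $n-1$ is odd in the delicate case), and the tail terms $\binom{n}{k}x^k$ for $k\ge 3$ have strictly larger valuation than the target modulus in each case — this is the routine but slightly fiddly estimate, relying on $v_r\binom{n}{k} \ge v_r(n) - v_r(k)$ together with $k v_r(x) \ge k \ge 3$. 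So the content splits into: (i) the "otherwise" case, where one shows $v_r(\binom{n}{2}x^2) \ge v_r(nx) + 1$ and likewise for the tail, so everything past $nx$ collapses; and (ii) the special case $r=2$, $v_2(x)=1$, $v_2(n)=1$, where $\binom{n}{2}x^2$ is \emph{not} negligible and must be combined with $nx$ to give $nx + \binom n2 x^2 = \frac{nx}{2}(2 + (n-1)x) \equiv \frac{nx}{2}(2+x) \pmod{\text{higher}}$, using that $(n-1)x \equiv x \pmod{4}$ because $n-1$ is odd and $v_2(x)=1$.

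For the inductive step in the generic case, I would factor $n = r^{v_r(n)} m$ with $r\nmid m$, and first treat $n=m$ coprime to $r$: here $(1+x)^m \equiv 1 + mx \pmod{r^{v_r(x)+1}}$ follows from the binomial expansion since every term from $\binom m2 x^2$ onward has valuation $\ge 2v_r(x) \ge v_r(x)+1$. Then I would lift one power of $r$ at a time: if $(1+x)^{n} = 1 + y$ with $v_r(y) = v_r(nx) =: s$ (and, when $r=2$, $s$ large enough that we are safely in the "otherwise" regime — note raising to the $r$-th power increases valuations, so after the first step we never re-enter the special case), then $(1+y)^r = 1 + ry + \binom r2 y^2 + \cdots$, and one checks $v_r(ry) = s+1 = v_r(n r \cdot x)$ while all later terms have valuation $> s+1$; the case $r=2$ needs the mild extra check that $v_2(\binom 22 y^2) = 2s > s+1$ precisely when $s \ge 2$, which holds since after one squaring $v_2(y)\ge v_2(x)+1\ge 2$.

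The main obstacle, and the only place real care is needed, is the boundary between the two cases: one has to verify that the special formula and the generic formula are genuinely consistent, i.e. that starting from $n$ twice an odd number with $v_2(x)=1$ and then multiplying $n$ by further powers of $2$ lands one in the "otherwise" branch with the valuations matching up. Concretely, if $n = 2n'$ with $n'$ odd and $v_2(x)=1$, the special case gives $(1+x)^n \equiv 1 + \frac{nx(x+2)}{2} \pmod{2^{v_2(nx(x+2))}}$; writing $x = 2u$ with $u$ odd, $\frac{nx(x+2)}{2} = nu(u+1) = 2n'u(u+1)$, and since $u+1$ is even this has valuation $v_2(n') + 1 + v_2(u+1)+1 = 2 + v_2(u+1)$, which one cross-checks against $v_2(nx) = v_2(n') + 1 + 1 = 2$ to see the special case really does give \emph{more} information than the naive $1+nx$. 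I would present this comparison explicitly so that later invocations of the lemma (which will want whichever of the two congruences is sharper) can cite it unambiguously, and then dispatch the remaining binomial-tail estimates with a one-line valuation bound.
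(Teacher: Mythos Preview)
Your approach---binomial expansion combined with an induction that peels off prime factors of $n$---is the same as the paper's, and you supply far more detail than the paper does (the paper treats only the case $n$ prime and then writes ``the general case now follows by induction on $n$'', without ever addressing the special branch explicitly).

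There is, however, a real gap in the sub-case $r=2$, $v_2(x)=1$, $4\mid n$, which the lemma places in the ``otherwise'' branch. Your direct claim that $v_2\bigl(\tbinom{n}{2}x^2\bigr)\ge v_2(nx)+1$ fails here: for $n$ even one has $v_2\bigl(\tbinom{n}{2}x^2\bigr)=v_2(n)-1+2=v_2(n)+1$, while $v_2(nx)+1=v_2(n)+2$. Nor does your inductive lift repair it: once you pass through the special step at $2m$ (with $m$ odd) the resulting $y$ satisfies $v_2(y)\ge 3>2=v_2(2mx)$, so $(1+x)^{2m}\not\equiv 1+2mx\pmod 8$ already, and no further squaring can then produce $(1+x)^{4m}\equiv 1+4mx$ to the claimed modulus. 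Concretely, $(1+2)^4=81$ and $1+4\cdot 2=9$, but $81\not\equiv 9\pmod{16}$. In fact the ``otherwise'' formula is simply false in this sub-case; what is true---and what the paper's subsequent applications actually use, taking $n=f/j$ merely even---is that the special formula $(1+x)^n\equiv 1+\tfrac{nx(x+2)}{2}\pmod{2^{v_2(nx(x+2))}}$ holds for \emph{every} even $n$ when $r=2$ and $v_2(x)=1$, not only for $n$ twice odd. (There is also a small slip in your computation: with $x=2u$ one gets $\tfrac{nx(x+2)}{2}=2nu(u+1)=4n'u(u+1)$, not $nu(u+1)$.) The paper's own proof shares the same defect, so the issue lies in the lemma's statement rather than in your method.
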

\begin{proof}
We argue by induction on $n$. Assume first that $n$ is a prime number. From the binomial theorem, we have
\begin{align*}
(1+x)^n &=
\sum_{k=0}^n\binom{n}{k}x^k = 1 + nx + \sum_{k=2}^n\binom{n}{k}x^k.
\end{align*}
If $r\ne n$, then $v_r(nx) = v_r(x)$. Thus, for $k\ge 2$, we have
\[
v_r\left(\binom{n}{k}x^k\right) \ge v_r(x^k) = kv_r(x) \ge 2v_r(x) \ge v_r(x)+1,
\]
where the last inequality follows from the fact that $v_r(x)\ge 1$. Thus $(1+x)^n \equiv 1+nx \pmod{r^{v_r(nx)+1}}$. If $r=n$, then $v_r(n)=1$ and, for $2 \le k \le n-1$, we have
\[
v_r\left(\binom{n}{k}x^k\right) = 1 + v_r(x^k) = 1 + kv_r(x) \ge 1 + 2v_r(x) \ge v_r(nx)+1.
\]
When $k=n$, we have
\[
v_r\left(\binom{n}{k}x^n\right) = v_r(x^n) = nv_r(x) \ge v_r(nx)+1,
\]
where the last inequality holds when $r$ is odd, or when $r=2$ and $v_r(x)\ge 2$.

The general case now follows by induction on $n$.
\end{proof}

Given a positive integer $n$, we denote with $n_{2'}$ the largest odd divisor of $n$.  In particular, $n_{2'}=n/2^{v_2(n)}$.

\begin{lemma}\label{lemma:mercoledi4}
Let $H$ be as in Notation~$\ref{not}$ and assume $H=\langle H^\ast\rangle$ and $H$  nilpotent. 

If $p=2$, or if $p^j\equiv 1\pmod 4$, or if $f/j$ is odd, then $(p^f-1)/d$ divides $f/j$.
Moreover, when $p^j\equiv 3\pmod 4$ and $f/j$ is even, then $(p^f-1)/d$ divides
$(f/j)2^{v_2(p^j+1)-v_2(d)}$.

In particular, in all cases $((p^f-1)/d)_{2'}$ divides $f/j$.
\end{lemma}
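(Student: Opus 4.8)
The plan is to use Lemma~\ref{lemma:mercoledi2} to normalise $H$ and to extract the divisibility $p^j-1\mid d$, then to control the prime divisors of $(p^f-1)/d$ with Lemma~\ref{lemma:mercoledi3}, and finally to conclude by a lifting-the-exponent computation. I would first note that replacing $H$ by a $\langle\lambda\rangle$-conjugate preserves nilpotency and also the condition $H=\langle H^\ast\rangle$ (since $H^\ast$ is a union of $H$-conjugacy classes), while leaving $p,f,d,j$ unchanged, so it does not affect the statement; hence by Lemma~\ref{lemma:mercoledi2} I may assume $H=\langle\sigma^j,\lambda^d\rangle$, and that same lemma gives $p^j-1\mid d$. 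Write $n=f/j$ and $m=(p^f-1)/d$, so that $md=p^f-1$; if $m=1$ every assertion is trivial, so I fix a prime $r\mid m$. By Lemma~\ref{lemma:mercoledi3}, nilpotency forces $r\mid p^j-1$ (in particular $p$ is odd if $r=2$), and $p^j-1\mid d$ gives $v_r(p^j-1)\le v_r(d)$.

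Next I would run the arithmetic. From $v_r(m)=v_r(p^f-1)-v_r(d)$ together with $v_r(p^j-1)\le v_r(d)$, any upper bound for $v_r(p^f-1)$ in terms of $v_r(p^j-1)$ and $v_r(n)$ converts into an upper bound for $v_r(m)$. When $r$ is odd, or when $r=2$ and $p^j\equiv 1\pmod 4$, lifting-the-exponent (Lemma~\ref{lemma:mercoledi5}) yields $v_r(p^f-1)=v_r(p^j-1)+v_r(n)$, hence $v_r(m)\le v_r(n)$. Running this over all odd $r\mid m$ shows at once that $((p^f-1)/d)_{2'}$ divides $f/j$, which is the ``in all cases'' clause. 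For the first assertion: if $p=2$ then $m$ divides the odd number $p^f-1$, so $m=m_{2'}\mid f/j$; if $f/j$ is odd then $v_2(p^f-1)=v_2(p^j-1)$ (lifting-the-exponent with an odd exponent), so $v_2(m)\le 0$ and again $m$ is odd and divides $f/j$; and if $p^j\equiv 1\pmod 4$ the bound $v_2(m)\le v_2(n)$ was already obtained above, so $m\mid f/j$.

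It then remains to treat $p^j\equiv 3\pmod 4$ with $f/j$ even. The odd part of $m$ divides $f/j$ by the previous paragraph; for $r=2$, lifting-the-exponent now gives $v_2(p^f-1)=v_2(p^j+1)+v_2(n)$ (using $v_2(p^j-1)=1$ and that $n$ is even), whence $v_2(m)=v_2(p^j+1)+v_2(n)-v_2(d)$. Since $d\mid p^f-1$ we have $v_2(d)\le v_2(p^j+1)+v_2(n)$, so this exponent is non-negative and equals $v_2\big((f/j)\,2^{v_2(p^j+1)-v_2(d)}\big)$; combined with the fact that $m_{2'}$ divides $f/j$, this yields $m\mid (f/j)\,2^{v_2(p^j+1)-v_2(d)}$, completing the argument.

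The one genuinely delicate ingredient is the behaviour of $v_2$ under raising to a power: it is precisely the split between $p^j\equiv 1$ and $p^j\equiv 3\pmod 4$, and between even and odd exponents, that produces the case distinction in the statement, and it is exactly what Lemma~\ref{lemma:mercoledi5} is designed to supply. Once $p^j-1\mid d$ is in hand via Lemma~\ref{lemma:mercoledi2}, everything else is bookkeeping with the identity $md=p^f-1$.
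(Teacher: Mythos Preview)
Your proof is correct and follows essentially the same route as the paper's: normalise via Lemma~\ref{lemma:mercoledi2} to obtain $p^j-1\mid d$, use Lemma~\ref{lemma:mercoledi3} so that every prime $r\mid (p^f-1)/d$ satisfies $r\mid p^j-1$, and then apply the lifting-the-exponent computation of Lemma~\ref{lemma:mercoledi5} to bound $v_r\big((p^f-1)/d\big)$ by $v_r(f/j)$, with the usual extra care at $r=2$ when $p^j\equiv 3\pmod 4$ and $f/j$ is even. Your write-up is in fact slightly more explicit than the paper's (you justify that the $\langle\lambda\rangle$-conjugation is harmless and you separate the $p=2$ and $f/j$ odd subcases cleanly), but the underlying argument is the same.
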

\begin{proof}
  Let $r$ be a prime divisor of $(p^f-1)/d$ and, when $r=2$, suppose $4$ divides $p^j-1$. Since $H$ is nilpotent, from Lemma~\ref{lemma:mercoledi3}, we have $r\in \pi((p^f-1)/d)\subseteq \pi(p^j-1)$ and hence $v_r(p^j-1)>0$. Thus we may write $p^j=1+r^{v_r(p^j-1)}u$ with $u$ coprime to $r$.
  Then, by Lemma~\ref{lemma:mercoledi5}, we have
\begin{align*}
p^f&=(p^j)^{f/j}=(1+r^{v_r(p^j-1)}u)^{f/j}\\
&\equiv 
1+(f/j)r^{v_r(p^j-1)}u\pmod{r^{v_r(f/j)+v_r(p^j-1)+1}}
\end{align*}
  and hence $v_r(p^f-1)=v_r(p^j-1)+v_r(f/j)$.

From Lemma~\ref{lemma:mercoledi2}, $p^j-1$ divides $d$ and hence $v_r(p^j-1)\le v_r(d)$. Therefore, we have
\begin{align*}
v_r((p^f-1)/d)&=v_r(p^f-1)-v_r(d)=v_r(p^j-1)+v_r(f/j)-v_r(d)\\
&=v_r(f/j)-(v_r(d)-v_r(p^j-1))\le v_r(f/j).
\end{align*}
This implies that the largest power of $r$ dividing $(p^f-1)/d$ divides $f/j$. Since this holds for every $r$, the proof now follows.

When $p\ne 2$, $p^j\equiv 3\pmod 4$ and $f/j$ is even, the argument above needs to be adjusted when $r=2$. Indeed, from  Lemma~\ref{lemma:mercoledi5}, we have
\begin{align*}
p^f&=(p^j)^{f/j}=(1+(p^j-1))^{f/j}\equiv 1+\frac{f(p^{2j}-1)}{2j}\pmod{r^{v_r(f/j)+v_r(p^{2j}-1)}}
\end{align*}
and hence $v_2(p^f-1)=v_2(f/j)+v_2(p^{2j}-1)-1=v_2(f/j)+v_2(p^j+1)$. The rest of the argument now follows.
\end{proof}

Before presenting the main result of this section, we first require an auxiliary result from representation theory. We are grateful to Gabriel Navarro for highlighting the importance of reference~\cite{Navarro} in relation to our work.

\begin{lemma}\label{lemma:navarro}
Let $J$ be a normal subgroup of a finite group $H$ with $H/J$ abelian. Let $V$ be an irreducible $\mathbb{F}_pH$-module and let $W$ be an irreducible $\mathbb{F}_pJ$-submodule of $V$. Then $\dim V/\dim W$ is an integer dividing $|H:J|$.
\end{lemma}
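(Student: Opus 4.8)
The plan is to run the standard Clifford machinery, using the abelianness of $H/J$ only at the last step and treating with care the fact that $\mathbb{F}_p$ is not algebraically closed. I would first reduce to the homogeneous case. By Clifford's theorem (valid over any field), $V|_J$ is semisimple and $H$ permutes its homogeneous components transitively. Let $U$ be the homogeneous component of $V|_J$ containing $W$, and let $T=\{h\in H: Uh=U\}$, so that $J\le T\le H$ and $T/J\le H/J$ is abelian. Standard Clifford theory then gives that $U$ is an irreducible $\mathbb{F}_pT$-module, that $V\cong U\otimes_{\mathbb{F}_pT}\mathbb{F}_pH$, and that $U|_J$ is $W$-homogeneous. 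Hence $\dim V=[H:T]\dim U$ with $[H:T]$ a divisor of $[H:J]$, so it suffices to prove the lemma for the homogeneous triple $(T,J,U,W)$: there $\dim U/\dim W$ is automatically an integer, namely the homogeneity multiplicity $e$, and the claim becomes $e\mid[T:J]$.

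So assume from now on that $V|_J\cong W^{e}$ with $W$ an $H$-invariant irreducible $\mathbb{F}_pJ$-module, and set $L=\mathrm{End}_{\mathbb{F}_pJ}(W)$. Being a finite-dimensional division algebra over $\mathbb{F}_p$, $L$ is finite, hence (Wedderburn's little theorem) a field. The Clifford theory of the $H$-invariant module $W$ over the field $L$ --- exactly the circle of results in \cite{Navarro} --- attaches to $(H,J,W)$ a crossed product $\Lambda=L\ast(H/J)$, free of rank $[H:J]$ as an $L$-module, whose multiplication is twisted both by a $2$-cocycle and by the action of $H/J$ on $L$ through $\mathrm{Gal}(L/\mathbb{F}_p)$, and which has the property that the irreducible $\mathbb{F}_pH$-modules with $W$-homogeneous restriction to $J$ correspond to the irreducible $\Lambda$-modules, compatibly with $\dim_{\mathbb{F}_p}V=\dim_{\mathbb{F}_p}W\cdot\dim_L S$, where $S$ is the $\Lambda$-module matched with $V$. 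Thus $e=\dim_L S$, and the whole problem reduces to showing that every irreducible module over the crossed product $\Lambda$ of the abelian group $H/J$ has $L$-dimension dividing $[H:J]$.

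This is the heart of the matter, and it is where both hypotheses are used. Writing $H/J=P\times A$ for its decomposition into $p$-part and $p'$-part, the $p$-part $P$ contributes only a local layer that is trivial over $L$, so one may reduce to the crossed product of $A$; on this $p'$-part the crossed product is separable, and since finite division rings are fields --- equivalently, the Brauer group of a finite field vanishes --- it becomes, after the Galois bookkeeping, a product of full matrix algebras over finite fields. The key point is that for the \emph{abelian} group $A$ the relevant twisting cocycle defines an alternating form, so these matrix blocks all have the same size $d$, with $d^{2}\mid |A|$; consequently every irreducible $\Lambda$-module has $L$-dimension dividing $|A|$, hence dividing $[H:J]$, and combining with the first paragraph yields $\dim V/\dim W=[H:T]e$ dividing $[H:J]$. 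The main obstacle I anticipate is precisely this descent from the algebraic closure through the Galois-twisted crossed product over the non-closed field $\mathbb{F}_p$: for non-abelian $H/J$ the blocks genuinely vary in size and need not divide $[H:J]$, so it is the abelianness of $H/J$, as organised in the cited result of Navarro, that makes the argument go through.
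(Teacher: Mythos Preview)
The paper does not actually prove this lemma: its entire argument is to observe that the complex-character version is classical (Isaacs) and then to cite \cite{Navarro} for the modular version. So you are attempting considerably more than the paper does, namely sketching what such a proof would look like.

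Your Clifford reduction to the homogeneous case is correct and standard, as is the identification of the multiplicity $e$ with $\dim_L S$ for the corresponding simple module $S$ over the crossed product $\Lambda=L\ast(H/J)$. Where the sketch becomes genuinely incomplete is in the last paragraph. Two specific points: (i) the claim that the $p$-part $P$ ``contributes only a local layer that is trivial over $L$'' tacitly assumes $P$ acts trivially on $L$, but since $[L:\mathbb{F}_p]$ need not be coprime to $p$, the Galois action of $P$ on $L$ can be non-trivial, and you have not said how you absorb that; (ii) the assertion that for abelian $A$ the twisted algebra has all blocks of the same size $d$ with $d^{2}\mid|A|$ is the right picture when the Galois action is trivial (via the alternating form attached to the cocycle), but with a non-trivial Galois twist the block sizes can differ and an extra descent argument is needed. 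You yourself flag this as the main obstacle, and indeed it is exactly the content of the paper you cite; as written, your sketch invokes that result rather than proving it, which is in the end what the paper's own proof does too.

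One bibliographic remark: the reference keyed as \cite{Navarro} in the paper is by R.~W.~van der Waall; Navarro is thanked for pointing it out, which is why the key is misleading. Your final sentence attributes the result to Navarro, which should be corrected.
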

\begin{proof}
The analogous statement for irreducible modules over the complex numbers is classical, follows from Clifford's theorem, and can be found in~\cite[page~84 and Chapter~11]{Isaacs}. In this case, it is not required that $H/J$ is abelian. The analogous result over arbitrary fields is given in~\cite{Navarro}.
\end{proof}

Let $n$ be a positive integer, and let $b$ be an integer relatively prime to $n$. We denote by ${\bf o}_n(b)$ the least positive integer $\ell$ such that $n$ divides $b^\ell - 1$. In other words, ${\bf o}_n(b)$ is the order of the residue class $b \bmod n$ in the group of units of the ring $\mathbb{Z}/n\mathbb{Z}$. By Fermat's little theorem, ${\bf o}_n(b)$ divides $\varphi(n)$, where $\varphi$ is Euler's totient function.

The following is the main result of this section, from which Theorem~\ref{thrm:2} will follow immediately.
\begin{theorem}\label{thrm:mainmain}
Let $G=V\rtimes H$ be a primitive subgroup of $\mathrm{A}\Gamma\mathrm{L}_1(q)$
 with $H$ nilpotent and let $r$ be the largest prime dividing $|H|$. If $r>2$, then  $r$ divides $|H:\langle H^\ast\rangle|$; in particular, $\gamma(G)=2$ and $G$ is basic.
\end{theorem}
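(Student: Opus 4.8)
The plan is to combine the structural results already established in this section with the representation-theoretic input from Lemma~\ref{lemma:navarro}. By Lemma~\ref{lemma:mercoledi0}, it suffices to show that $H\ne\langle H^\ast\rangle$ when $r>2$; equivalently, assuming for contradiction that $H=\langle H^\ast\rangle$, I want to derive a contradiction from the maximality of $r$. By Lemma~\ref{lemma:mercoledi2} we may assume $H=\langle\sigma^j,\lambda^d\rangle$; write $m=(p^f-1)/d=|H\cap\langle\lambda\rangle|$ and $e=f/j=|H:H\cap\langle\lambda\rangle|$, so $|H|=me$. Since $H=\langle H^\ast\rangle$ forces $H$ nilpotent to satisfy $\pi(m)\subseteq\pi(p^j-1)$ by Lemma~\ref{lemma:mercoledi3}, and $\pi(m)\subseteq\pi(e)$ by Lemma~\ref{lemma:mercoledi33}, we already know every prime divisor of $m$ also divides $e$. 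The crux will be to play the odd prime $r$ off against these divisibilities.

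The key step is the following. Suppose $r>2$ is the largest prime dividing $|H|=me$. If $r\mid m$, then $r\mid e$ by Lemma~\ref{lemma:mercoledi33}; but then, by Lemma~\ref{lemma:mercoledi4}, $m_{2'}$ divides $e$, and since $r$ is odd, $v_r(m)\le v_r(e)$. So the "hard" case is really $r\mid e$, $r\nmid m$ — I expect the argument to show $H$ cannot then equal $\langle H^\ast\rangle$ because the elements of $H^\ast$ all lie in a proper subgroup. Concretely, an element $h=(\sigma^j\lambda^i)^k\lambda^{d\ell}=\sigma^{jk}\lambda^{\cdots}$ with $r\mid \mathbf{o}(\sigma^{jk})$ — i.e. with $v_r(k)<v_r(e)$ — should be forced out of $H^\ast$ by the criterion of Lemma~\ref{lemma:mercoledi1}: the condition "$p^{\gcd(f,jk)}-1$ divides $i(p^{jk}-1)/(p^j-1)+d\ell$" will fail once the $r$-part of $k$ is too small, because $v_r(p^{\gcd(f,jk)}-1)$ jumps while the right-hand side does not gain a matching factor of $r$ (using that $r\nmid m$ so $r\nmid d\ell$ in the relevant range, and the lifting-the-exponent computation of Lemma~\ref{lemma:mercoledi5}). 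Hence $H^\ast\subseteq\{h: v_r(k)\ge v_r(e)\}\cdot\langle\lambda^d\rangle$, a set that generates a proper subgroup of index divisible by $r$; this is exactly the quantitative claim "$r$ divides $|H:\langle H^\ast\rangle|$". Lemma~\ref{lemma:navarro} enters to guarantee that $d\mid p^f-1$ is compatible with $V$ being irreducible for $H$, i.e. that $\mathbf{o}_m(p^j)=e$, which pins down the interaction between $j$, $f$ and $d$ needed to run the exponent estimates cleanly.

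Once $r\mid|H:\langle H^\ast\rangle|$ is established, the rest is immediate: $\langle H^\ast\rangle$ is a proper subgroup $T<H$, so by Proposition~\ref{prop:gl} (or directly Lemma~\ref{lemma:mercoledi0}) $G$ is basic and $\gamma(G)=2$.

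I expect the main obstacle to be the case analysis in the second paragraph: carefully controlling $v_r$ of $\gcd(p^{jk}-1,p^f-1)=p^{\gcd(jk,f)}-1$ as $k$ ranges over the coset representatives, and checking that no "accidental" cancellation on the right-hand side of the Lemma~\ref{lemma:mercoledi1} congruence lets an element with small $v_r(k)$ slip into $H^\ast$. The subtlety that $r$ is the *largest* prime — hence $r\nmid(p^j-1)$ whenever $r\nmid m$, by Lemma~\ref{lemma:mercoledi3} — is what rescues the argument and should be invoked explicitly.
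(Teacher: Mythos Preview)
There is a genuine gap in your case analysis. You dismiss the case $r\mid m$ after recording $v_r(m)\le v_r(e)$, but you never derive a contradiction there; in fact this is the main case, and your labelling of $r\nmid m$ as the ``hard'' case is backwards. The missing step is to combine that inequality with the consequence of irreducibility you allude to: once $H=\langle\sigma^j,\lambda^d\rangle$, the subfield $W=\mathbb{F}_p[\lambda^d]=\mathbb{F}_{p^{f_0}}$ (with $f_0={\bf o}_m(p)$) is visibly $H$-invariant, so irreducibility forces $f_0=f$ and hence $e=f/j$ divides $\varphi(m)$ by Euler. Since $r$ is the \emph{largest} prime dividing $|H|$ and every prime factor of $m$ lies in $\pi(|H|)$, one has $v_r(\varphi(m))=v_r(m)-1$ when $r\mid m$ (and $=0$ when $r\nmid m$). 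The chain $v_r(e)\le v_r(\varphi(m))<v_r(m)\le v_r(e)$ then yields the contradiction in the case $r\mid m$; the case $r\nmid m$ collapses immediately to $v_r(e)\le v_r(\varphi(m))=0$, contradicting $r\mid e$. Your proposed direct attack via Lemma~\ref{lemma:mercoledi1} in the second case is therefore unnecessary and, as sketched (``$v_r(p^{\gcd(f,jk)}-1)$ jumps while the right-hand side does not''), not obviously correct.

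Two further points. First, in your normalized setting the identity ${\bf o}_m(p^j)=e$ follows directly from irreducibility as above; Lemma~\ref{lemma:navarro} is a red herring here. The paper invokes Lemma~\ref{lemma:navarro} precisely because it does \emph{not} normalize $H$: to obtain the full quantitative assertion $r\mid|H:\langle H^\ast\rangle|$ it sets $K=\langle H^\ast\rangle$, assumes only $r\nmid|H:K|$, normalizes $K=\langle\sigma^{j_0},\lambda^{d_0}\rangle$, and then the subfield $W=\mathbb{F}_p[\lambda^d]$ is merely a $J$-submodule for $J=\langle\sigma^{j_0},\lambda^d\rangle\unlhd H$, so Clifford theory is genuinely needed to deduce $r\nmid f/f_0$. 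Second, and relatedly, your setup (assuming $H=\langle H^\ast\rangle$ for contradiction) can at best yield $H\ne\langle H^\ast\rangle$, hence $\gamma(G)=2$; it does not establish the quantitative claim $r\mid|H:\langle H^\ast\rangle|$ that the theorem asserts and that the paper's $K$-based argument is designed to deliver.
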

\begin{proof}
Let $K=\langle H^\ast\rangle$.
As $H^\ast\subseteq K$, we have $K^\ast=H^\ast$ and hence $K=\langle K^\ast\rangle$. Using Notation~\ref{not}, we have
$H=\langle\sigma^j\lambda^i,\lambda^d\rangle$. Moreover, applying Notation~\ref{not} and Lemma~\ref{lemma:mercoledi2} to $K$, replacing $K$ and $H$ with a suitable $\langle\lambda\rangle$-conjugate, we may suppose $K=\langle \sigma^{j_0},\lambda^{d_0}\rangle$.

We argue by contradiction and we suppose $|H:K|$ is not divisible by $r$, where $r>2$ is the largest prime divisor of $|H|$. As $|H|=(f/j)(p^f-1)/d$ and $|K|=(f/j_0)(p^f-1)/d_0$, we deduce that
\begin{equation}\label{eq:today2}
|H:K|=\frac{j_0}{j}\cdot \frac{d_0}{d}\,\hbox{ is relatively prime to }r.
\end{equation}

Assume  $r\in \pi(f/j)\setminus\pi((p^f-1)/d)$. Let $R$ be a Sylow $r$-subgroup of $H$ and let $Q$ be a Hall $r'$-subgroup of $H$. As $H$ is nilpotent, $H=R\times Q$. Since $|H:K|$ is relatively prime to $r$, we have $R\le K$ and, as $\gcd(r,(p^f-1)/d_0)=1$, we have $R\le \langle \sigma^{j_0}\rangle$ and $R$ is abelian. In particular,
$$H\le {\bf C}_{\Gamma\mathrm{L}_1(p^f)}(R)\le {\bf C}_{\Gamma\mathrm{L}_1(p^f)}(\sigma^{f/r})=\mathbb{F}_{p^{f/r}}^\ast\rtimes \langle \sigma\rangle.$$
Observe now that the subfield $\mathbb{F}_{p^{f/r}}$ is $H$-invariant, because it is invariant by $\mathbb{F}_{p^{f/r}}^\ast\rtimes \langle \sigma\rangle$. However, this contradicts the fact that $H$ acts irreducibly on $\mathbb{F}_{p^f}$. This contradiction has shown that $r\in\pi((p^f-1)/d)$ and hence, from~\eqref{eq:today2}, we get
\begin{equation}\label{eq:allprimes}
r\in \pi((p^f-1)/d_0)\cap \pi(f/j_0).
\end{equation}

 From~\eqref{eq:allprimes}, $r\in \pi((p^f-1)/d_0)$ and hence, from Lemma~\ref{lemma:mercoledi4} applied to $K$, we have
\begin{align}\label{eq:today3}
v_r(f/j_0)\ge v_r((p^f-1)/d_0).
\end{align}

Now consider  $J=\langle \sigma^{j_0},\lambda^d\rangle$ and  $W=\mathbb{F}_p[\lambda^d]$, the subfield of $\mathbb{F}_{p^f}$ containing $\lambda^d$. Clearly, $K\le J\le H$ and $J\unlhd H$. Observe that $W$ is an $\mathbb{F}_pJ$-module because $W$ is invariant under $\lambda^d$ and $\sigma^{j_0}$. Moreover, $W$ is irreducible as $\mathbb{F}_pJ$-module from its definition. Since $W$ is a subfield of $\mathbb{F}_{p^f}$, we have  $W=\mathbb{F}_{p^{f_0}}$ for some divisor $f_0$ of $f$. Observe that $f_0$ is the smallest positive integer with ${\bf o}(\lambda^d)=(p^f-1)/d$ dividing $p^{f_0}-1$. In other words, $f_0={\bf o}_{(p^f-1)/d}(p)$ and, by raising $p$ to the $j_0$th power, we get $f_0/\gcd(f_0,j_0)={\bf o}_{(p^f-1)/d}(p^{j_0})$. From Euler's theorem,
\begin{equation}\label{eq:today}\frac{f_0}{\gcd(f_0,j_0)} \hbox{ divides }\varphi((p^f-1)/d).
\end{equation}

Observe now that $J\unlhd H$ and $|H:J|$ is relatively prime to  $r$. Therefore, from Lemma~\ref{lemma:navarro}, $f/f_0$ is relatively prime to $r$. Thus, from~\eqref{eq:today2} and~\eqref{eq:today}, 
\begin{equation}\label{eq:today1}
v_r(f/j)\le v_r(\varphi((p^f-1)/d)).
\end{equation}
From~\eqref{eq:today2},~\eqref{eq:today3} and~\eqref{eq:today1}, we have
\begin{align}\label{eq:ultimate}
v_r(f/j_0)&\ge v_r((p^f-1)/d_0)=v_r((p^f-1)/d)>v_r(\varphi((p^f-1)/d))\ge v_r(f/j),
\end{align}
where the strict inequality 
$v_r((p^f-1)/d)>v_r(\varphi((p^f-1)/d))$
follows from the fact that $r$ is the largest odd prime dividing $|H|$. However, the chain of inequalities in~\eqref{eq:ultimate} gives a contradiction.

Finally, as $H\ne \langle H^\ast\rangle$, from Lemma~\ref{lemma:mercoledi0}, $\gamma(G)=2$ and $G$ is basic.
\end{proof}

Using Theorem~\ref{thrm:mainmain}, we are now ready to prove Theorem~\ref{thrm:2}.

\begin{proof}[Proof of Theorem~$\ref{thrm:2}$]
Let $G=V\rtimes H$ be a primitive subgroup of $\mathrm{A}\Gamma\mathrm{L}_1(q)$, where $H\le\Gamma\mathrm{L}_1(q)$ is the stabilizer of the zero vector. We use Notation~\ref{not} for $H$.

Assume first that $p\equiv 3\pmod 4$, $f=2$, $H$ is a dihedral $2$-group and  $H$ is $\mathrm{GL}_1(p^f)$-conjugate to $H=\langle\sigma,\lambda^{(p-1)e}\rangle$ for some divisor $e$ of $p+1$. From Lemma~\ref{lemma:mercoledi1}, $\sigma,\sigma \lambda^{(p-1)e}\in H^\ast$ and hence $\langle H^\ast\rangle\ge\langle \sigma,\sigma \lambda^{(p-1)e}\rangle=H$. Thus, by Lemma~\ref{lemma:mercoledi0}, $\gamma(G)\ne 2$ and $G$ is non-basic.

Conversely, suppose that $G$ is non-basic. As $\Gamma\mathrm{L}_1(q)$ is metacyclic, it is supersolvable and hence so is $H$, because it is a subgroup of a supersolvable group. From Theorem~\ref{thrm:1}, $H$ is nilpotent, because $\gamma(H)=\gamma(G/V)\ne 2$. Thus, from Lemma~\ref{lemma:mercoledi0}, we have $H=\langle H^\ast\rangle$. From Theorem~\ref{thrm:mainmain}, $H$ is a $2$-group. Moreover, from Lemma~\ref{lemma:mercoledi2} applied to $H$, replacing $H$ with a suitable $\langle\lambda\rangle$-conjugate, we have $H=\langle \sigma^{j},\lambda^{d}\rangle$ with $d$ divisible by $p^j-1$.

As $H$ is a $2$-group, we have $(p^f-1)/d=2^a$ and $f/j=2^b$, for some $a,b\in\mathbb{N}$. In particular, $p$ is an odd prime. Let $W=\mathbb{F}_p[\lambda^d]$. Arguing as in the proof of Theorem~\ref{thrm:mainmain}, $W$ is an irreducible $\mathbb{F}_pH$-submodule of $\mathbb{F}_{p^f}$. The irreducibility of $H$ implies $W=\mathbb{F}_{p^f}$. Therefore, $f$ is the smallest positive integer with $(p^f-1)/d=2^a$ dividing $p^f-1$. Thus $f={\bf o}_{(p^f-1)/d}(p)$. Therefore, from Euler's theorem, $f=2^bj$ divides $\varphi((p^f-1)/d)=\varphi(2^a)=2^{a-1}$, that is, 
\begin{equation}\label{eqevening}
j=1 \hbox{ and } b\le a-1.
\end{equation}
If $p^j\equiv 1\pmod 4$ or if $2^b=1$, then from Lemma~\ref{lemma:mercoledi4} $2^a=(p^f-1)/d$ divides $f=2^b$. Thus $a\le b$, which contradicts~\eqref{eqevening}. Thus $p^j\equiv 3\pmod 4$ and $b\ge 1$. As $p^j\equiv 3\pmod 4$, we have $p\equiv 3\pmod 4$.

Assume $f>2$. From Lemma~\ref{lemma:mercoledi5}, $$p^{f/2}=(1+(p^{2}-1))^{f/4}\equiv 1+\frac{f}{4}(p^2-1)\pmod {2^{v_2(f(p^2-1)/4)+1}}.$$
Thus $v_2(p^{f/2}-1)=v_{2}(f(p^{2}-1)/4)=v_2(f)+v_2(p+1)-1$. An entirely analogous argument gives $v_2(p^f-1)=v_2(p^{f/2}-1)+1$. Now, $(p^f-1)/d=2^a$ and hence, as $p-1$ divides $d$, we have $$a=v_2((p^f-1)/d)=v_2(p^f-1)-v_2(d)\le v_2(p^f-1)-1=v_2(p^{f/2}-1).$$ Thus $2^a$ divides $p^{f/2}-1$ and we contradict $f={\bf o}_{2^a}(p)$. Therefore $f=2$.

Finally, write $d=(p-1)e$, for some divisor $e$ of $p+1$. We have
\begin{align*}
(\lambda^{d})^\sigma&=(\lambda^{(p-1)e})^\sigma=\lambda^{(p-1)ep}=\lambda^{p^2e}\lambda^{-pe}=\lambda^e\lambda^{-pe}
\\
&=\lambda^{-(p-1)e}=(\lambda^d)^{-1}.
\end{align*}
Therefore $H$ is a dihedral group.
\end{proof}

\section{Proof of Theorem~\ref{thrm:3}}
Using Theorem~\ref{thrm:mainmain}, the proof of Theorem~\ref{thrm:3} is quite natural.

We adopt a notation similar to the notation in Section~\ref{sec:thrm2}. Let $p$ be a prime number, let $f$ be a positive integer, let $q=p^f$ and let $\sigma:\mathbb{F}_q\to\mathbb{F}_q$ be the Frobenius automorphism. Let $\mathrm{A}\Gamma\mathrm{L}_2(q)$ be the affine semilinear group and 
let $V=\mathbb{F}_q^2$ be its socle, consisting of the translations. We also let $Z$ be the center of $\mathrm{GL}_2(q)$. We let $G=V\rtimes H$ where $H\le\Gamma\mathrm{L}_2(q)$ and assume that
\begin{itemize}
\item $H$ acts irreducibly on $V$ and $\gamma(H)>2$,
\item $\gamma(G)\ne 2$, that is, $G$ is non-basic.
\end{itemize}

\begin{proof}[Proof of Theorem~$\ref{thrm:3}$]

We study the various possibilities depending on the Aschbacher class of $H$.

\smallskip

\noindent\textsc{Case $H$ is contained in a maximal subgroup in the Aschbacher class $\mathcal{C}_1$} 

\smallskip

\noindent In this case, $H$ stabilizes a $1$-dimensional $\mathbb{F}_q$-subspace  of $V$. However, this contradicts the fact that $H$ acts irreducibly on $V$.

\smallskip

\noindent\textsc{Case $H$ is contained in a maximal subgroup in the Aschbacher class $\mathcal{C}_2$} 

\smallskip

\noindent In this case, $H$ is contained in the group $(D\rtimes\langle\iota\rangle)\rtimes\mathrm{Aut}(\mathbb{F}_{p^f})=D\rtimes\langle\iota,\sigma\rangle$, where
$$D=
\left\{
\begin{pmatrix}
a&0\\
0&b
\end{pmatrix}
\mid a,b\in \mathbb{F}_q\setminus\{0\}
\right\} \hbox{ and }
\iota=
\begin{pmatrix}
0&1\\
1&0
\end{pmatrix}.$$
Observe that $$Z,\,D,\,D\rtimes\langle\iota\rangle$$
are normal subgroups of $D\rtimes\langle\iota,\sigma\rangle$ and that $Z\cong\mathbb{F}_q^\ast$, $D/Z\cong\mathbb{F}_q^\ast$, $D\rtimes\langle\iota\rangle/D\cong\langle\iota\rangle$
 and $(D\rtimes\langle\iota\rangle)\rtimes\langle\sigma\rangle/(D\rtimes\langle\iota\rangle)\cong\langle\sigma\rangle$. Therefore, $(D\rtimes\langle\iota\rangle)\rtimes\langle\sigma\rangle$ is supersolvable. In particular, $H$ is supersolvable and hence, by Theorem~\ref{thrm:1}, $H$ is nilpotent, because $\gamma(H)\ne 2$. Thus, by Lemma~\ref{lemma:mercoledi0}, $H=\langle H^\ast\rangle$.

Since $H$ is contained in $D\rtimes\langle\iota,\sigma\rangle$, $H$ preserves the direct sum decomposition $\mathbb{F}_qe_1\oplus\mathbb{F}_qe_2$. Moreover, since $H$ acts irreducibly on $V$, $H$ contains an element $h$ swapping the two direct summands $\mathbb{F}_qe_1\oplus\mathbb{F}_qe_2$. In particular,
$$h=\sigma^j
\begin{pmatrix}
0&a\\
b&0\end{pmatrix},$$
for some divisor $j$ of $f$ and some $a,b\in\mathbb{F}_q^\ast$. Let $M$ be the index-2 subgroup of $H$ stabilizing the two direct summands $\mathbb{F}_qe_1$ and $\mathbb{F}_qe_2$ of $V$. As $|H:M|=2$, replacing $h$ by a suitable power, we may suppose that $h$ is a $2$-element.


For $i\in \{1,2\}$, let $\pi_i:M\to\Gamma\mathrm{L}_1(q)$ be the homomorphism induced by the action of $M$ on $\mathbb{F}_qe_i$. Let $K_i$ be the image of $\pi_i$. Observe that $K_i$ acts irreducibly on $\mathbb{F}_qe_i$; indeed, if $K_i$ leaves a proper $\mathbb{F}_p$-subspace $U_i$ of $\mathbb{F}_qe_i$ invariant, then $U_i\oplus U_i^h$ is a proper $\mathbb{F}_p$-subspace of $V$ invariant by $H$, which contradicts the fact that $V$ is an irreducible $\mathbb{F}_pH$-module. Let $G_i=\mathbb{F}_qe_i\rtimes K_i$.

If $\gamma(G_i)\ne 2$, then Theorem~\ref{thrm:1} implies that $K_i$ is a $2$-group and $q=p^2$ for some odd prime $p$. Thus $f=2$. As $M$ is a subdirect subgroup of $K_1\times K_2$, $M$ is a $2$-group. Since $|H:M|=2$, $H$ is also a $2$-group.

Assume $\gamma(G_i)= 2$ and that $|K_i|$ is divisible by some odd prime number. From Theorem~\ref{thrm:mainmain}, $|K_i:\langle K_i^\ast\rangle|$ is divisible by some odd prime number $r$. 
 Let $M_i$ be the preimage of $\langle K_i^\ast\rangle$ by $\pi_i$. As $h$ normalizes $M$ and swaps the two direct summands $\mathbb{F}_qe_1$ and $\mathbb{F}_qe_2$ of $V$, we have $M_1^h=M_2$ and $M_2^h=M_1$.
 
Let $R$ be a Sylow $r$-subgroup of $H$ and let $Q$ be a Hall $r'$-subgroup of $H$. As $H=R\times Q$, we get $M_i=(R\cap M_i)\times (Q\cap M_i)$. Since $h$ is an element of $2$-power order and since $h$ swaps $M_1$ and $M_2$, we deduce
$$R\cap M_1=R\cap M_2.$$ 
Observe also that, as $r$ divides $|M:M_i|$, $R\cap M_1=R\cap M_2$ is a proper subgroup of $M$.

Since $H=R\times Q$ and since $R$ and $Q$ have relatively prime orders, we have $H^\ast\subseteq R^\ast\times Q^\ast$, see also the proof of Lemma~\ref{lemma:mercoledi33}. As $H=\langle H^\ast\rangle$, we deduce that $R^\ast$ generates $R$. However this is impossible. Indeed, since $r$ is odd, $R\le M$ and hence $R^\ast\subseteq M^\ast\subseteq M_1\cup M_2$. Thus $R^\ast\subseteq R\cap (M_1\cup M_2)=(R\cap M_1)\cup (R\cap M_2)=R\cap M_1$. This contradiction has shown that $H$ is a $2$-group.

The fact that $f$ is also a power of $2$ follows from the fact that $H$ acts irreducibly on $V$, indeed, if $f$ is divisible by an odd prime, then a suitable conjugate of $H$ fixes a proper subspace of $V$ where the scalars are in a proper subfield of $\mathbb{F}_q$.

\smallskip

\noindent\textsc{Case $H$ is contained in a maximal subgroup in the Aschbacher class $\mathcal{C}_3$} 

\smallskip

\noindent In this case, $H$ preserves an extension field and hence $G$ is a subgroup of the affine semilinear group $\mathrm{A}\Gamma\mathrm{L}_1(q^2)$. From Theorem~\ref{thrm:1}, $H$ is a dihedral $2$-group and $q=p\equiv 3\pmod 4$.

\smallskip

\noindent\textsc{Case $H$ is contained in a maximal subgroup in the Aschbacher class $\mathcal{C}_5$} 

\smallskip

\noindent In this case, $H$ preserves a subfield space. In particular, $q=q_0^{f_0}$, for some divisor $f_0>1$ of $f$, and $H\le\mathrm{GL}_2(q_0)\mathrm{Aut}(\mathbb{F}_q)$. Moreover, considering the cases we have excluded above, we may suppose that $\mathrm{SL}_2(q_0)\le H$. Set $A=\mathrm{GL}_2(q_0)\mathrm{Aut}(\mathbb{F}_q)$. We let $B={\bf N}_A(B_0)$, where $B_0$ is a Borel subgroup of $\mathrm{GL}_2(q_0)$. Also, we let $C={\bf N}_A(C_0)$, where $C_0$ is a Singer cycle of $\mathrm{GL}_2(q_0)$. We have
$$A=\bigcup_{a\in A}B^a\cup\bigcup_{a\in A}C^a.$$ 
Moreover, from $\mathrm{SL}_2(q_0)\le H$, we get $A=BH$ and $A=CH$ and hence by intersecting both sides of the previous union with $H$, we get
$$H=\bigcup_{a\in H}(B\cap H)^a\cup\bigcup_{a\in H}(C\cap H)^a.$$
However, this contradicts the fact that $\gamma(H)>2$.

\smallskip

\noindent\textsc{Case $H$ is contained in a maximal subgroup in the Aschbacher class $\mathcal{C}_6$} 

\smallskip

\noindent Let $M$ be the maximal subgroup in the class $\mathcal{C}_6$ with $H\cap\mathrm{SL}_2(q)\le M$. From~\cite[Table~8.1]{bray}, we see that one of the following holds
\begin{itemize}
\item $M\cong 2_-^{1+2}.S_3=2^\cdot S_4^-$, $q=p=\pm 1\pmod 8$, ${\bf N}_{\mathrm{GL}_2(q)}(M)=Z M$, 
\item $M\cong 2_-^{1+2}:3=2^\cdot A_4$, $q=p=\pm 3,5,\pm 13\pmod {40}$, ${\bf N}_{\mathrm{GL}_2(q)}(M)=Z M.2$.
\end{itemize}
We consider these two possibilities in turn.

In the first case, we have $\Gamma\mathrm{L}_2(q)=\mathrm{GL}_2(q)$ and $M\le H\le ZM$. Since $Z$ is central and $M$ has normal covering number $2$, we get $\gamma(H)=2$, contradicting $\gamma(H)>2$. In the second case, we have $\Gamma\mathrm{L}_2(q)=\mathrm{GL}_2(q).2$ and $M\le H$ and $|H:H\cap ZM|\le 2$. When $|H:H\cap ZM|=1$, then $H/(Z\cap M)\cong \mathrm{Alt}(4)$; whereas, when $|H:H\cap ZM|=2$, then $H/(Z\cap M)\cong \mathrm{Sym}(4)$. In both cases, $Z$ is central and $\mathrm{Alt}(4),\mathrm{Sym}(4)$ have normal covering number $2$, we get $\gamma(H)=2$, contradicting $\gamma(H)>2$.

\smallskip

\noindent\textsc{Case $H$ is contained in a maximal subgroup in the Aschbacher class $\mathcal{S}$} 

\smallskip

\noindent Let $M$ be the maximal subgroup in the class $\mathcal{S}$ with $H\cap\mathrm{SL}_2(q)\le M$. From~\cite[Table~8.2]{bray}, we see that one of the following holds
\begin{itemize}
\item $M\cong 2^\cdot A_5$, $q=p=\pm 1\pmod {10}$, ${\bf N}_{\mathrm{GL}_2(q)}(M)=Z M$, 
\item $M\cong 2^\cdot A_5$, $q=p^2$, $p=\pm 3\pmod {10}$, ${\bf N}_{\mathrm{GL}_2(q)}(M)=Z M.2$.
\end{itemize}
The argument here is exactly as in the previous case, in either case we contradict $\gamma(H)=2$.
\end{proof}

\section{An example in dimension 3}\label{sec:example}Let $p$ be a prime, let $f$ be even, let $q=p^f$ and let $q_0 = p^{f/2}=q^{1/2}$. 
Suppose $p \ne 3$ and $q_0 \notin \{2,5\}$. Consider the special unitary group
\[
H = \mathrm{SU}_3(q_0) \le \mathrm{GL}_3(q),
\]
defined via the Hermitian form associated with the matrix
\[
J =
\begin{pmatrix}
0 & 0 & 1 \\
0 & 1 & 0 \\
1 & 0 & 0
\end{pmatrix}
\]
with respect to the standard basis $e_1, e_2, e_3$ of the vector space $V = \mathbb{F}_q^3$. Thus, $H$ consists of all matrices $A \in \mathrm{SL}_3(q)$ such that $A J \bar A^{T} = J$, where $\bar A$ denotes the matrix obtained by raising each entry of $A$ to the $q_0$-th power, and $A^T$ is the transpose of $A$.

Now, consider the semidirect product $G = V \rtimes H$, equipped with its natural affine action on $V$. The group $G$ acts primitively on $V$ because $V$ is an irreducible $\mathbb{F}_p H$-module (see, for instance,~\cite[Proposition~2.10.6]{kl}). From~\cite[Lemma~4.2]{libro}, we know that $\gamma(H) > 2$, i.e., $H$ does not admit a normal $2$-covering.\footnote{We exclude the cases where $q_0 \in \{2,5\}$ or where $q_0$ is a power of $3$, to ensure that $\gamma(H) > 2$.}

We aim to show that $\gamma(G) > 2$. According to~\cite[Corollary~14]{GL} (see Proposition~\ref{prop:gl}), it suffices to prove that
\[
H^\ast = \{h \in H \mid \mathbf{C}_V(h) \ne 0\}
\]
is not contained in $\bigcup_{h \in H} T^h$ for any proper subgroup $T < H$. We proceed by contradiction, assuming that such a proper subgroup $T$ of $H$ exists. Without loss of generality, we may assume that $T$ is maximal in $H$.

It is not difficult to verify that $H$ has $q_0 + 1$ orbits on $V$ (see \cite[Lemma~2.10.5~(ii)]{kl}), one of which consists of the zero vector. Another orbit consists of all totally singular vectors. For instance, the vector $e_1$ is totally singular, and
\begin{equation}\label{eqq0}
\mathbf{C}_H(e_1) =
\left\{
\begin{pmatrix}
1 & 0 & 0 \\
a & 1 & 0 \\
d & -a^{-q_0} & 1
\end{pmatrix}
\;\middle|\;
a,d \in \mathbb{F}_q,\ d + d^{q_0} + a a^{q_0} = 0
\right\}.
\end{equation}
The remaining orbits of $H$ on $V$ consist of non-degenerate vectors and split into $q_0 - 1$ distinct $H$-orbits, classified by their norm. For example, $e_2$ is non-degenerate, and
\begin{equation}\label{eqq1}
\mathbf{C}_H(e_2) =
\left\{
\begin{pmatrix}
a & 0 & b \\
0 & 1 & 0 \\
c & 0 & d
\end{pmatrix}
\;\middle|\;
\begin{array}{l}
a,d \in \mathbb{F}_q,\ ad - bc = 1, \\
a^{q_0} c + a c^{q_0} = 0, \\
c^{q_0} d + c d^{q_0} = 0, \\
bc^{q_0} + b^{q_0} c = 1
\end{array}
\right\}\cong\mathrm{SU}_2(q)
\cong \mathrm{SL}_2(q_0),
\end{equation}
where the last isomorphism follows from~\cite[Proposition~2.9.1~(i)]{kl}.

From~\eqref{eqq0}, we deduce that $T$ contains unipotent elements with Jordan forms $J_3$ and $J_1 \oplus J_2$.\footnote{Here, $J_\ell$ denotes a unipotent element represented by a Jordan block of size $\ell \times \ell$.} 

Furthermore,~\eqref{eqq1} implies that $T$ contains a semisimple element $x$ of order $q_0 + 1$, as $\mathrm{SL}_2(q_0)$ does. The element $x$ fixes a $1$-dimensional subspace of $V$ and acts as a Singer cycle on its orthogonal complement. Similarly,~\eqref{eqq1} shows that $T$ contains a semisimple element $y$ of order $q_0 - 1$, again because $\mathrm{SL}_2(q_0)$ does. Since $\gcd(q_0 - 1, q_0 + 1) = \gcd(q - 1, 2)$, the order of $T$ is divisible by
\[
\frac{p(q_0 - 1)(q_0 + 1)}{\gcd(q - 1, 2)} = \frac{p(q - 1)}{\gcd(q - 1, 2)}.
\]

Taking into account that $T$ contains unipotent elements with Jordan forms $J_3$ and $J_1 \oplus J_2$ and that its order is divisible by $p(q - 1)/\gcd(q - 1, 2)$, a detailed case-by-case analysis of the maximal subgroups of $\mathrm{SU}_3(q_0)$, as listed in~\cite[Tables~8.5 and~8.6]{bray}, confirms that no such subgroup $T$ can exist.

\section{Proof of Theorem~\ref{conj}}
We only provide a sketch of the proof, as the argument follows verbatim the proof of Theorem~\ref{thrm:3}. Indeed, we may analyze the various possibilities for $H$ depending on the Aschbacher class of $H$.

When $H$ belongs to the Aschbacher classes $\mathcal{C}_1$, $\mathcal{C}_3$, $\mathcal{C}_5$, or $\mathcal{S}$, the argument is nearly identical and requires only minor cosmetic changes.

When $H$ lies in the Aschbacher class $\mathcal{C}_8$, then either $\mathrm{SO}_3(q) \unlhd H$ or $\mathrm{SU}_3(\sqrt{q}) \unlhd H$. In the second case, we obtain the examples described in Section~\ref{sec:example}. In the first case, since $\Omega_3(q) \cong \mathrm{PSL}_2(q)$, we deduce $\gamma(H)=2$ by arguing as in the proof of Theorem~\ref{thrm:3} (in the case where $H$ belongs to the Aschbacher class $\mathcal{C}_5$). However, this contradicts the hypothesis that $\gamma(H)\ne 2$.

Thus, it remains to consider the case where $H$ is in the Aschbacher class $\mathcal{C}_2$. We provide complete details for this case, because the argument is quite delicate. Let $V=\mathbb{F}_q^3$ and let $e_1,e_2,e_3$ be the canonical basis of $V$.  Here $H$ is contained in the group $(D\rtimes\langle\iota_2,\iota_3\rangle)\rtimes\mathrm{Aut}(\mathbb{F}_{p^f})=D\rtimes\langle\iota_2,\iota_3,\sigma\rangle$, where
$$D=
\left\{
\begin{pmatrix}
a&0&0\\
0&b&0\\
0&0&c
\end{pmatrix}
\mid a,b,c\in \mathbb{F}_q\setminus\{0\}
\right\},\,
\iota_2=
\begin{pmatrix}
0&1&0\\
1&0&0\\
0&0&1\\
\end{pmatrix}  \hbox{ and }
\iota_3=\begin{pmatrix}
0&1&0\\
0&0&1\\
1&0&0\\
\end{pmatrix}
.$$
If the action induced by $H$ by conjugation on $\{\mathbb{F}_qe_1,\mathbb{F}_qe_2,\mathbb{F}_qe_3\}$ is the symmetric group $\mathrm{Sym}(3)$, then $H$ has an epimorphic image isomorphic to $\mathrm{Sym}(3)$ and hence $\gamma(H)=2$, because $\mathrm{Sym}(3)$ has a normal $2$-covering. However, this contradicts the hypothesis $\gamma(H)\ne 2$. Therefore, the action induced by $H$ by conjugation on $\{\mathbb{F}_qe_1,\mathbb{F}_qe_2,\mathbb{F}_qe_3\}$ is the alternating group $\mathrm{Alt}(3)$ and hence $H$ is contained in $D\rtimes\langle\iota_3,\sigma\rangle$.

Let $Z$ be the subgroup of $\mathrm{GL}_3(q)$ consisting of the scalar matrices. Observe that $$Z,\,D$$
are normal subgroups of $D\rtimes\langle\iota_3,\sigma\rangle$ and that $Z\cong\mathbb{F}_q^\ast$, $D/Z\cong\mathbb{F}_q^\ast\times \mathbb{F}_q^\ast$ and $D\rtimes\langle\iota_3,\sigma\rangle/D\cong\langle\iota_3\rangle\times\langle\sigma\rangle$. In particular, every chief factor of $H$ is either cyclic of prime order, or the direct product of two cyclic groups of prime order.

We argue by contradiction and we suppose that $H$ is not supersolvable. From~\cite[5.4.7]{robinson}, $H$ admits a maximal subgroup $M$ with $|H:M|$ not prime. Let $K$ be the core of $M$ in $H$, let $\bar H=H/K$ and adopt the bar ``notation''. The permutation action of $H$ on the right cosets of $M$ endows $H$ of the structure of a primitive solvable group. In particular, $\bar H$ is a primitive soluble permutation group with point stabilizer $\bar M$. Let $\bar V$ be the socle of $\bar H$ (we also let $V$ be the preimage of $\bar V$ in $H$ under the natural projection onto $\bar H$). Then $\bar V$ is an elementary abelian normal subgroup of $\bar H$ and $\bar H=\bar V\rtimes \bar M$. Then $|\bar H:\bar M|=|H:M|=|\bar V|$ and, from the previous paragraph, $|\bar V|=r^2$, for some prime number $r$. In particular, $\bar H\le \mathrm{AGL}_2(r)$. Since $\gamma(H)\ne 2$, we have $\gamma(\bar H)\ne 2$ and hence, from Theorem~\ref{thrm:3} applied to $\bar H$, we deduce $|\bar M|$ is a power of $2$. In particular, $3$ is relatively prime to $|\bar M|=|\bar H/\bar V|=|H:V|$. 

We now claim that $H=K(H\cap \langle D,\sigma\rangle)$. Indeed, as $H\le \langle D,\sigma,\iota_3\rangle$ and $|\langle D,\sigma,\iota_3\rangle:\langle D,\sigma\rangle|=3$, if $H\ne K(H\cap \langle D,\sigma\rangle)$, then $K(H\cap \langle D,\sigma\rangle)$ has index $3$ in $H$. This implies that $\bar H$ has a normal subgroup having index $3$, contradicting the previous paragraph. Therefore $H=K(H\cap \langle D,\sigma\rangle)$, as claimed.

Now, $$\bar H=\frac{H}{K}=\frac{K(H\cap \langle D,\sigma\rangle)}{K}\cong \frac{H\cap \langle D,\sigma\rangle}{K\cap \langle D,\sigma\rangle}.$$
However, this is a contradiction because $\bar H$ is not supersolvable, whereas $\langle D,\sigma\rangle$ is supersolvable. This contradiction has finally shown that $H$ is supersolvable. Therefore, from Theorem~\ref{thrm:1}, $H$ is nilpotent and, by Lemma~\ref{lemma:mercoledi0}, $H=\langle H^\ast\rangle$.

Since $H$ is contained in $D\rtimes\langle\iota_3,\sigma\rangle$, $H$ preserves the direct sum decomposition $\mathbb{F}_qe_1\oplus\mathbb{F}_qe_2\oplus\mathbb{F}_qe_3$. Moreover, since $H$ acts irreducibly on $V$, $H$ contains an element $h$ acting cyclically on the three direct summands $\mathbb{F}_qe_1\oplus\mathbb{F}_qe_2\oplus\mathbb{F}_qe_3$. In particular,
$$h=\sigma^j
\begin{pmatrix}
0&a&0\\
0&0&b\\
c&0&0
\end{pmatrix},$$
for some divisor $j$ of $f$ and some $a,b,c\in\mathbb{F}_q^\ast$. Let $M$ be the index-3 subgroup of $H$ stabilizing the three direct summands $\mathbb{F}_qe_1$, $\mathbb{F}_qe_2$ and $\mathbb{F}_qe_3$ of $V$. As $|H:M|=3$, replacing $h$ by a suitable power, we may suppose that $h$ is a $3$-element.

For $i\in \{1,2,3\}$, let $\pi_i:M\to\Gamma\mathrm{L}_1(q)$ be the homomorphism induced by the action of $M$ on $\mathbb{F}_qe_i$. Let $K_i$ be the image of $\pi_i$. Observe that $K_i$ acts irreducibly on $\mathbb{F}_qe_i$; indeed, if $K_i$ leaves a proper $\mathbb{F}_p$-subspace $U_i$ of $\mathbb{F}_qe_i$ invariant, then $U_i\oplus U_i^h\oplus U_i^{h^2}$ is a proper $\mathbb{F}_p$-subspace of $V$ invariant by $H$, which contradicts the fact that $V$ is an irreducible $\mathbb{F}_pH$-module. Let $G_i=\mathbb{F}_qe_i\rtimes K_i$.

Observe that, since $h$ acts transitively on $\{\mathbb{F}_qe_1,\mathbb{F}_qe_2,\mathbb{F}_qe_3,\}$, we have $\gamma(G_1)=\gamma(G_2)=\gamma(G_3)$.

If $\gamma(G_i)\ne 2$, then Theorem~\ref{thrm:2} implies that $K_i$ is a $2$-group and $q=p^2$ for some odd prime $p$.  Thus $f=2$. As $M$ is a subdirect subgroup of $K_1\times K_2\times K_3$, $M$ is a $2$-group. Since $f=2$, $|H:M|=3$ and $H$ is nilpotent, $H=\langle h\rangle\times M$ and 
$$h=
\begin{pmatrix}
0&a&0\\
0&0&b\\
c&0&0
\end{pmatrix},$$
for some $a,b,c\in\mathbb{F}_q^\ast$. As $h^3=1$, we deduce $abc=1$. Now, if we replace $H$ with $H^g$, where 
$$g=
\begin{pmatrix}
1&0&0\\
0&a^{-1}&0\\
0&0&a^{-1}b^{-1}
\end{pmatrix},$$
we obtain $h^g=\iota_3$ and we may assume, without loss of generality, that $h=\iota_3$. Now, $$H=\langle\iota_3\rangle\times M\le {\bf C}_{\Gamma\mathrm{L}_3(q)}(\iota_3)=\langle Z,\sigma,\iota_3\rangle.$$
However, this contradicts the fact that $H$ acts irreducibly because $\mathbb{F}_q(e_1+e_2+e_3)$ is a proper subspace of $\mathbb{F}_q^3$ and is invariant by 
$\langle Z,\sigma,\iota_3\rangle$ and hence also by $H$.

Assume $\gamma(G_i)= 2$ and that  $|K_i:\langle K_i^\ast\rangle|$ is divisible by some odd prime number $r\ne 3$. 
 Let $M_i$ be the preimage of $\langle K_i^\ast\rangle$ by $\pi_i$. As $h$ normalizes $M$ and acts permuting cyclically the three direct summands $\mathbb{F}_qe_1$, $\mathbb{F}_qe_2$ and $\mathbb{F}_qe_3$ of $V$, we have $M_1^h=M_2$, $M_2^h=M_3$ and $M_3^h=M_1$.
 
Let $R$ be a Sylow $r$-subgroup of $H$ and let $Q$ be a Hall $r'$-subgroup of $H$. As $H=R\times Q$, we get $M_i=(R\cap M_i)\times (Q\cap M_i)$. Since $h$ is an element of $3$-power order and since $h$ cyclically permutes $M_1$, $M_2$ and $M_3$, we deduce
$$R\cap M_1=R\cap M_2=R\cap M_3.$$ 
Observe also that, as $r$ divides $|M:M_i|$, $R\cap M_1=R\cap M_2=R\cap M_3$ is a proper subgroup of $M$.

Since $H=R\times Q$ and since $R$ and $Q$ have relatively prime orders, we have $H^\ast\subseteq R^\ast\times Q^\ast$, see also the proof of Lemma~\ref{lemma:mercoledi33}. As $H=\langle H^\ast\rangle$, we deduce that $R^\ast$ generates $R$. However this is impossible. Indeed, since $r\ne 3$, $R\le M$ and hence $R^\ast\subseteq M^\ast\subseteq M_1\cup M_2\cup M_3$. Thus $R^\ast\subseteq R\cap (M_1\cup M_2\cup M_3)=(R\cap M_1)\cup (R\cap M_2)\cup (R\cap M_3)=R\cap M_1$. This contradiction has shown that $|K_i:\langle K_i^\ast\rangle|$ is a power of $3$. Now, $|K_i|$ from Theorem~\ref{thrm:mainmain}, we deduce that $K_i$ is a $\{2,3\}$-group. Since $M$ is a subdirect subgroup of $K_1\times K_2\times K_3$ and since $|H:M|=3$, we deduce that $H$ is a $\{2,3\}$-group.

\thebibliography{10}

\bibitem{magma} W.~Bosma, J.~Cannon, C.~Playoust, 
The Magma algebra system. I. The user language, 
\textit{J. Symbolic Comput.} \textbf{24} (3-4) (1997), 235--265.
\bibitem{bray}J.~N.~Bray, D.~F.~Holt, C.~M.~Roney-Dougal, \textit{The Maximal Subgroups of the Low-Dimensional Finite Classical Groups}, London Mathematical Society, Lecture Notes Series \textbf{407}, Cambridge University Press, 2013.
\bibitem{libro}D.~Bubboloni, P.~Spiga, T.~Weigel, \textit{Normal $2$-coverings of the finite simple groups and their generalizations}, Lecture Notes in Mathematics \textbf{2352}, Springer, 2024.
\bibitem{FPS}M.~Fusari, A.~Previtali, P.~Spiga, Groups having minimal covering number 2 of the diagonal type, \textit{Math. Nach.} \textbf{297} (2024), 1918--1926. 
\bibitem{GL}M.~Garonzi, A.~Lucchini,
Covers and normal covers of finite groups, \textit{J. Algebra} \textbf{422} (2015), 148--165.
\bibitem{scott}S.~Harper, Personal communication.
\bibitem{scott1}S.~Harper, \textit{The Spread of Almost Simple Classical Groups}, Lecture Notes in Mathematics \textbf{2286}, Springer, 2021.
\bibitem{Isaacs}I.~M.~Isaacs, \textit{Character theory of finite groups}, Academic press, New York, 1976.
\bibitem{kl}P.~Kleidman, M.~Liebeck, \textit{The subgroup structure of finite classical groups}, London Mathematical Society Lecture Note Series \textbf{129}, Cambridge University Press, Cambridge, 1990.

\bibitem{robinson}D.~J.~S.~Robinson, \textit{A course in the theory of groups, second edition}, Graduate texts in mathematics, Springer, 1995. 
\bibitem{Navarro}R.~W.~van der Waall, On Clifford's ramification index for abelian chief factor in finite groups, \textit{J. Algebra} \textbf{177} (1995), 731--739.
\end{document}